\newtheorem{theorem}{Theorem}[section]
\newtheorem{lemma}[theorem]{Lemma}
\newtheorem{corollary}[theorem]{Corollary}
\newtheorem{proposition}[theorem]{Proposition}
\theoremstyle{definition}
\newtheorem{definition}[theorem]{Definition}
\newtheorem{example}[theorem]{Example}
\newtheorem{question}[theorem]{Question}
\newtheorem{claim}[theorem]{Claim}
\theoremstyle{remark}
\newtheorem{remark}[theorem]{Remark}
\numberwithin{equation}{section}
\begin{document}

\title{Cowen-Douglas Operator and Shift on Basis}

\author{Juexian Li}
\address{}
\curraddr{School of Mathematics, Liaoning University, Shenyang 110036, People¡¯s Republic
of China}
\email{juexianli@sina.com}
\thanks{This project is supported by the National Natural Science Foundation of China (Grant No.
11371182, 11401283 and 11271150).}

\author{Geng Tian}
\address{School of Mathematics, Liaoning University, Shenyang 110036, People¡¯s Republic
of China and Department of Mathematics, Texas A M University, College Station,
TX 77843}
\email{xiluomath@sina.com}

\author{Yang Cao}
\address{School of Mathematics, Jilin University, Changchun 130012, People¡¯s Republic of
China}
\email{caoyang@jlu.edu.cn}

\subjclass[2000]{Primary 54C40, 14E20; Secondary 46E25, 20C20}



\keywords{Cowen-Douglas operator, Basis theory, Complex bundle}

\begin{abstract}
In this paper we show a Cowen-Douglas
operator $T \in \mathcal{B}_{n}(\Omega)$ is the adjoint operator of some backward shift on a general
basis by choosing nice cross-sections of its complex bundle $E_{T}$. Using the
basis theory model, we show that a Cowen-Douglas operator never be a shift on some
Markushevicz basis for $n \ge 2$.
\end{abstract}

\maketitle

\section{Introduction}

In this paper we try to make a basis theory understanding of Cowen-Douglas operators.
Let $\mathcal H$ be a separable, infinite dimensional, complex
Hilbert space, and  let ${\mathcal L}({\mathcal H})$ denote the
algebra of all bounded linear operators on $\mathcal H$. For $T\in
{\mathcal L}({\mathcal H})$, $T^*$, $\sigma (T)$ and $r(T)$ denote
the adjoint of $T$, the spectrum of $T$ and the spectral radius of
$T$, respectively.

For a connected open subset $\Omega $ of the complex plane
$\mathbb{C}$ and a positive integer $n$, ${\mathcal B}_n(\Omega )$
denotes the set of operators $T$ in ${\mathcal L}({\mathcal H})$
which satisfy

(1) $\Omega \subset \sigma (T)$;

(2) $\mathrm{ran}(T-z)={\mathcal H}$, $\forall z \in \Omega $;

(3) $\bigvee_{z\in\Omega}{\rm ker}(T-z)=\mathcal H$, and

(4) $\mathrm{dim\,ker}(T-z )=n$, $\forall z \in \Omega $.

\noindent Call an operator in ${\mathcal B}_n(\Omega)$ a
Cowen-Douglas operator \cite[Definition 1.2]{Douglas1}.

Clearly, if $T\in {\mathcal B}_n(\Omega )$ then $\Omega \subset \rho
_F(T)$ which denotes Fredholm domain and
$$
\mathrm{ind}(T-z)=\mathrm{dim\,ker}(T-z)-\mathrm{dim\,ker}(T-z)^*=n
\hbox{ for } z \in \Omega.
$$

For $T\in{\mathcal B}_n(\Omega)$, the mapping $z\mapsto\ker(T-z)$
defines a rank $n$ Hermitian holomorphic vector bundle, or briefly
complex bundle, $E_T$ over $\Omega$. A holomorphic cross-section of
the complex bundle $E_T$ is a holomorphic map
$\gamma:\Omega\rightarrow\mathcal H$ such that for every
$z\in\Omega$, the vector $\gamma(z)$ belongs to the fibre
$\ker(T-z)$ of $E_T$. Moreover, for a complex number $z_0$ in
$\Omega$, we will also consider a local cross-section of $E_T$
defined on a a neighborhood $\Delta$ of $z_0$.

Originally, Cowen-Douglas operators were introduced as using the
method of complex geometry to developing operator theory (see
\cite{Douglas1} and \cite{Douglas2}). However, it has been presented
recently that they are closely related to the structure of bounded
linear operators (see \cite{Jiang1}, \cite{Jiang2}, \cite{Jiang3}
and \cite{Jiang4}).

A typical example for a $n$-multiplicity Cowen-Douglas operator is
$n$-multiplicity backward shift on an orthonormal basis. A
characterization  $n$-multiplicity backward operator weighted shifts
being Cowen-Douglas operators has ever given in terminology of their
weight sequences \cite{Li}. In this paper, we shall show the
following.

\begin{theorem}\label{Theorem: CD Operators are shift on C-M sequence}
Let $T \in \mathcal{B}_{n}(\Omega)$. Then for every $z_{0} \in
\Omega$, there exists a complete and minimal vector sequence
$\{f_{k}\}_{k=0}^{\infty}$ in $\mathcal{H}$ such that $T-z_{0}$ is
the backward shift on $\{f_{k}\}_{k=0}^{\infty}$.
\end{theorem}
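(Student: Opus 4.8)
The plan is to read the backward-shift requirement as a single Jordan chain and then to build a complete, minimal such chain by superposing the Taylor data of a holomorphic frame of $E_{T}$. Write $N=T-z_{0}$. That $N$ be the backward shift on $\{f_{k}\}_{k=0}^{\infty}$ means precisely $Nf_{0}=0$ and $Nf_{k}=f_{k-1}$ for $k\ge 1$, i.e.\ $\{f_{k}\}$ is one Jordan chain for $N$ at $0$. The raw material is a holomorphic frame $\gamma^{(1)},\dots,\gamma^{(n)}$ of $E_{T}$ on a neighborhood $\Delta$ of $z_{0}$, so that $\{\gamma^{(1)}(z),\dots,\gamma^{(n)}(z)\}$ spans $\ker(T-z)$ for $z\in\Delta$. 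Expanding $\gamma^{(j)}(z)=\sum_{k\ge 0}(z-z_{0})^{k}\gamma^{(j)}_{k}$ and inserting it into $0=(T-z)\gamma^{(j)}(z)=(N-(z-z_{0}))\gamma^{(j)}(z)$, then comparing coefficients of $(z-z_{0})^{k}$, yields $N\gamma^{(j)}_{0}=0$ and $N\gamma^{(j)}_{k}=\gamma^{(j)}_{k-1}$. Thus each frame section is itself a Jordan chain, and the whole family $\{\gamma^{(j)}_{k}:1\le j\le n,\ k\ge 0\}$ is complete: if $w\perp\gamma^{(j)}_{k}$ for all $j,k$, then each $z\mapsto\langle\gamma^{(j)}(z),w\rangle$ is holomorphic with vanishing Taylor coefficients at $z_{0}$, hence vanishes on $\Omega$, so $w\perp\bigvee_{z}\ker(T-z)=\mathcal H$ and $w=0$. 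From the radius of convergence of the frame I also record Cauchy bounds $\|\gamma^{(j)}_{k}\|\le CR^{k}$.

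Next I would fuse the $n$ frame chains into one by convolving in the level index. Fixing scalar weight sequences $w^{(1)},\dots,w^{(n)}$, set
$$f_{k}=\sum_{j=1}^{n}\sum_{i=0}^{k}w^{(j)}_{i}\,\gamma^{(j)}_{k-i}.$$
Since each frame chain obeys the shift relation and the weights are independent of $k$, a direct check gives $Nf_{k}=f_{k-1}$ and $Nf_{0}=0$, so $\{f_{k}\}$ is a Jordan chain for \emph{every} choice of weights; normalizing $w^{(1)}_{0}=1$ keeps $f_{0}\ne 0$. The freedom in the weights is exactly what I will spend to force completeness and minimality.

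The main obstacle is choosing the weights so that $\{f_{k}\}$ remains complete. Suppose $w\perp f_{k}$ for all $k$ and put $\phi^{(j)}_{m}=\langle\gamma^{(j)}_{m},w\rangle$; the Cauchy bounds give $|\phi^{(j)}_{m}|\le C'R^{m}$, while orthogonality reads $\sum_{j=1}^{n}\bigl(w^{(j)}*\phi^{(j)}\bigr)_{k}=0$ for all $k$, where $(w^{(j)}*\phi^{(j)})_{k}=\sum_{i=0}^{k}w^{(j)}_{i}\phi^{(j)}_{k-i}$. I would take the $w^{(j)}$ with growth rates that are not merely increasing but separated on vastly different scales (for instance $w^{(1)}=\delta_{0}$, and each $w^{(j+1)}$ growing faster than every fixed shift $w^{(j)}_{k-m}$ and faster than the exponential rate $R$). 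Because each $\phi^{(j)}$ grows at most exponentially, the term $w^{(n)}*\phi^{(n)}$ then dominates the asymptotics of the sum as $k\to\infty$ unless $\phi^{(n)}\equiv 0$; peeling off $\phi^{(n)},\phi^{(n-1)},\dots,\phi^{(1)}$ in turn forces every $\phi^{(j)}\equiv 0$, whence $w$ is orthogonal to all $\gamma^{(j)}_{m}$ and $w=0$ by the completeness above. Making these comparisons rigorous — bounding $w^{(j)}*\phi^{(j)}$ uniformly over all exponentially bounded $\phi^{(j)}$ and ruling out cancellation across scales — is the technical heart of the proof and is what pins down the precise growth conditions on the weights. (For $n=1$ no weights are needed: the Taylor coefficients of a single nonvanishing section already do the job.)

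Finally, minimality I would obtain from an adjoint chain. For any Jordan chain one has $(T-z_{0})^{k}f_{j}=f_{j-k}$ (with $f_{<0}=0$), so if there is one vector $g_{0}$ with $\langle f_{j},g_{0}\rangle=\delta_{j0}$, then $g_{k}:=(T^{*}-\bar z_{0})^{k}g_{0}$ is biorthogonal to $\{f_{k}\}$, since $\langle f_{j},g_{k}\rangle=\langle (T-z_{0})^{k}f_{j},g_{0}\rangle=\langle f_{j-k},g_{0}\rangle=\delta_{jk}$. Thus minimality of the entire sequence reduces to the single condition $f_{0}\notin\overline{\operatorname{span}}\{f_{k}:k\ge 1\}$, which I would secure with the same weight freedom by keeping $f_{0}=\gamma^{(1)}_{0}$ detectable by a functional annihilating the remaining $f_{k}$. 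The resulting biorthogonal system $\{g_{k}\}$ is a single forward $T^{*}$-chain, hence cannot be total when $n\ge 2$; this is consistent with, and anticipates, the companion assertion that such a Cowen--Douglas operator is never a shift on a Markushevich basis for $n\ge 2$.
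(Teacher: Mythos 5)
Your skeleton is right, and two of your components are correct and even attractive: (i) each frame section $\gamma^{(j)}$ of $E_T$ is a Jordan chain for $N=T-z_0$, and the full coefficient family $\{\gamma^{(j)}_k : 1\le j\le n,\ k\ge 0\}$ is complete; (ii) minimality of a chain reduces, via the adjoint chain $g_k=(T^*-\bar z_0)^kg_0$, to the single condition $f_0\notin\bigvee_{k\ge 1}\{f_k\}$ — this biorthogonal argument is in fact cleaner than the approximation argument the paper uses for the same equivalence (its Theorem 3.7). The problem is that the actual difficulty of the theorem when $n\ge 2$ — producing \emph{one} chain that is complete — is exactly the step you do not carry out, and the heuristic you offer for it is not sound as stated.

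Concretely, put $h_j(z)=\sum_i w^{(j)}_i(z-z_0)^i$ and $\Phi_j(z)=\sum_m \langle\gamma^{(j)}_m,w\rangle(z-z_0)^m$. Then ``$w\perp f_k$ for all $k$'' is the statement $\sum_{j=1}^n h_j\Phi_j=0$ as formal power series, where each $\Phi_j$ is convergent (radius $\ge 1/R$ by your Cauchy bounds). So what your weights must satisfy is that $h_1,\dots,h_n$ be linearly independent over the field of convergent power series at $z_0$; growth separation alone does not visibly give this, because the enemy is cancellation, not size. Your key assertion — ``$w^{(n)}*\phi^{(n)}$ dominates the sum unless $\phi^{(n)}\equiv 0$'' — needs, at minimum, a division lemma (if $\Phi\not\equiv 0$ is convergent and the coefficient sequence of $h\Phi$ is exponentially bounded, then $h$ itself has positive radius of convergence), followed by a downward induction through the scales with uniform estimates over all exponentially bounded $\phi^{(j)}$; you explicitly defer all of this as ``the technical heart,'' so what you have is a program, not a proof, and it fails precisely at the theorem's main point. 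The paper avoids this difficulty entirely: it quotes Zhu's Theorem A \cite{Zhu} to get a \emph{spanning} holomorphic cross-section of $E_T$ on $\Omega$, checks that spanning persists on subdiscs (imitating \cite[Corollary 1.13]{Douglas1}), and then multiplies by a scalar holomorphic function (its Lemma 3.5) to make the section pseudocanonical, i.e. $f_0\perp f_k$ for $k\ge 1$, which settles minimality in the same stroke. Note that your minimality step has the same deferred character (``I would secure it with the same weight freedom'' — you never exhibit the functional $g_0$); that one is easy to repair, e.g. by a triangular scalar renormalization $\tilde f_k=f_k+\sum_{i=1}^k c_i f_{k-i}$ solved recursively so that $\tilde f_k\perp f_0$, but the completeness gap is the substantive one.
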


Here the word ``complete'' means
$\bigvee_{k=0}^\infty\{f_{k}\}=\mathcal{H}$, that is the linear
compositions of $\{f_{k}\}_{k=0}^{\infty}$  are dense in
$\mathcal{H}$. And, The word ``minimal'' refers to $f_{n} \notin
\bigvee_{k \not= n}\{f_{k}\}$.

By Hahn-Banach Theorem, a sequence $\{f_{k}\}_{k=0}^{\infty}$ in
$\mathcal{H}$ is minimal if and only if there is a sequence
$\{g_{k}\}_{k=0}^{\infty}$ in $\mathcal{H}$ such that
$$
(f_i,g_j)=\delta_{ij},~i,j=0,1,2,\cdots,
$$
i.e., the pair $(f_k,g_k)$ is a biorthogonal system. If
$\{g_{k}\}_{k=0}^{\infty}$ is also total, which means $\{x\in
\mathcal{H}:\ (x,g_k)=0,\ \forall k\geq 0\}=\{0\}$ (or equivalently,
$\{g_{k}\}_{k=0}^{\infty}$ is complete), then
$\{f_{k}\}_{k=0}^{\infty}$ is called  a \textit{generalized basis}
of $\mathcal{H}$ \cite[Def. 7.1]{Singer2}. For a generalized basis
$\{f_{k}\}_{k=0}^{\infty}$, if it is also complete then say it to be
a \textit{Markushevich basis} of $\mathcal{H}$ \cite[Def.
8.1]{Singer2}. Obviously, a Schauder basis is always a Markushevich
basis.

Use basis theory terminology,
we can get a stronger version of theorem \ref{Theorem: CD Operators are shift on C-M sequence}.

\begin{theorem}\label{Theorem: CD operator is an adjoint op of some shift on G basis}
For a Cowen-Douglas operator $T \in \mathcal{B}_{n}(\Omega), 0\in \Omega$, its adjoint operator
$T^{*}$ is a shift on some generalized basis.
\end{theorem}

Theorem \ref{Theorem: CD operator is an adjoint op of some shift on G basis} tell us that a Cowen-Douglas operator always is an adjoint of
some shift on a generalized basis when its spectrum contains $0$.

Weighted shifts on a a generalized basis (or a Markushevich basis)
of Banach space have ever investigated in \cite{Grabiner}.
The following result shows what happen for the operators in $\mathcal{B}_{n}(\Omega)$ in the case $n\ge 2$.

\begin{theorem}\label{Theorem: A Cowen-Douglas oper never be a shift on basis for n ge 2}
Let $T \in{\mathcal L}({\mathcal H})$. If there is a complex number
$z_0$ such that $\mathrm{dim\,ker}(T-z_0)\geq 2$ (in particular, if
$T \in \mathcal{B}_{n}(\Omega)$ and $n\ge 2$) then $T$ never is a
backward shift on any  Markushevich basis of $\mathcal{H}$.
\end{theorem}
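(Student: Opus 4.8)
The plan is to argue by contradiction, working with the biorthogonal functionals attached to the Markushevich basis rather than trying to expand vectors in the basis itself (whose expansions need not converge). Suppose, to the contrary, that $T$ is the backward shift on a Markushevich basis $\{f_k\}_{k=0}^{\infty}$, so that $Tf_0=0$ and $Tf_k=f_{k-1}$ for $k\geq 1$. Let $\{g_k\}_{k=0}^{\infty}$ be the biorthogonal system, so that $(f_i,g_j)=\delta_{ij}$; by the definition of a Markushevich basis, $\{f_k\}$ is complete and $\{g_k\}$ is total. Fix a $z_0$ with $\dim\ker(T-z_0)\geq 2$.

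The first key step is to show that $T^{*}$ is the corresponding \emph{forward} shift on the dual sequence, namely $T^{*}g_k=g_{k+1}$ for every $k\geq 0$. To see this I would compute, for each $j\geq 0$,
$$(f_j,T^{*}g_k)=(Tf_j,g_k).$$
Using $Tf_0=0$ and $Tf_j=f_{j-1}$ for $j\geq 1$ together with biorthogonality, the right-hand side equals $\delta_{j,k+1}$, which is exactly $(f_j,g_{k+1})$. Hence $(f_j,T^{*}g_k-g_{k+1})=0$ for all $j$, and since $\{f_k\}$ is complete this forces $T^{*}g_k=g_{k+1}$.

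The second step converts an eigenvector of $T$ into a rigid condition on its coordinates. Take any $x\in\ker(T-z_0)$, so $Tx=z_0 x$. Writing $a_k=(x,g_k)$ and using the forward-shift identity just obtained,
$$a_{k+1}=(x,g_{k+1})=(x,T^{*}g_k)=(Tx,g_k)=z_0(x,g_k)=z_0 a_k,$$
so that $a_k=z_0^{k}a_0$ for all $k$. Thus every element of $\ker(T-z_0)$ is completely determined by the single scalar $a_0=(x,g_0)$: if $x,y\in\ker(T-z_0)$ have $(x,g_0)=(y,g_0)$, then $(x-y,g_k)=0$ for all $k$, and totality of $\{g_k\}$ gives $x=y$. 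Consequently the map $x\mapsto(x,g_0)$ is injective on $\ker(T-z_0)$, so $\dim\ker(T-z_0)\leq 1$, contradicting the hypothesis.

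The computations here are elementary; the conceptual point, and the only place where care is needed, is that both halves of the Markushevich-basis hypothesis are genuinely used, and in opposite directions — completeness of $\{f_k\}$ to identify $T^{*}$ as an honest forward shift, and totality of $\{g_k\}$ to deduce one-dimensionality of the eigenspace. The main obstacle to a naive approach is exactly that one cannot simply expand an eigenvector as $\sum a_k f_k$ and read off the recurrence, since such a series need not converge for a general Markushevich basis; passing to the functionals $g_k$ is precisely what circumvents this. The same argument applies with only cosmetic changes to weighted backward shifts with nonzero weights, the recurrence becoming $a_{k+1}=(z_0/w_{k+1})a_k$, so that the eigenspace is again at most one-dimensional.
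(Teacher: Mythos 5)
Your proposal is correct and follows essentially the same route as the paper: both establish that $T^{*}$ acts as the forward shift $T^{*}g_k=g_{k+1}$ on the biorthogonal sequence (using completeness of $\{f_k\}$), then exploit the recurrence $(x,g_{k+1})=z_0(x,g_k)$ for $x\in\ker(T-z_0)$ together with totality of $\{g_k\}$. The only cosmetic difference is that the paper picks a nonzero $x\in\ker(T-z_0)$ with $(x,g_0)=0$ and derives $x=0$ by induction, whereas you phrase the same fact as injectivity of $x\mapsto(x,g_0)$ on the eigenspace.
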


Therefore a Cowen-Douglas operator $T \in \mathcal{B}_{n}(\Omega), n\ge 2$ never be a shift on some Schauder basis.
On the other hand, example \ref{Example: Shift on conditional basis} shows that
if we consider the n-multiple shift case(or more general, operator-weighted shift situation) then a Cowen-Douglas operator
(In this example, we can choose the canonical shift $(S^{*})^{2}$)can be
seen as a 2-multiple backward shift on a conditional basis.

Although the aim of this paper is to show the relations between basis theory and the class of
Cowen-Douglas operator, the main tools to get the proper sequences is choosing the good cross-sections
of the complex bundle $E_{T}$. In the next section we shall recall some basic results about
the special cross-section of the complex bundle of
Cowen-Douglas operators. In the third section we prove our main theorem \ref{Theorem: CD Operators are shift on C-M sequence}.
After this
we shall propose a general shift on biorthonal system model for Cowen-Douglas operators
in the lemma \ref{Lemma: Adjoint of a Shift on a spanning minimal seq must be a shift on G basis}
and prove theorem \ref{Theorem: CD operator is an adjoint op of some shift on G basis}
and theorem \ref{Theorem: A Cowen-Douglas oper never be a shift on basis for n ge 2}.
In the last section, we focus on the case $\mathcal{B}_{1}(\Omega)$. Theorem \ref{Theorem: when Cowen-Douglas operators are shifts}
give some equivalent conditions to decide whether a Cowen-Douglas operator $T \in \mathcal{B}_{1}(T)$
is a shift on some Markushevicz basis or not. And then theorem \ref{Theorem: C-D operators are shifts (1)}
give an operator theory description of the condition that  a Cowen-Douglas operator $T \in \mathcal{B}_{1}(T)$
can be a backward
weighted shift on an ONB.

\section{Canonical Cross-sections of The Complex Bundles of Cowen-Douglas Opertaotrs}

Firstly, we figure out a special operator related to every
Cowen-Douglas operator $T$ in $\mathcal{B}_{n}(\Omega)$. It can be
used to build special cross-sections of the complex bundle $E_{T}$.

\begin{lemma}\label{Lemma: Existence of Canonical Right Inverse}
Let $T\in{\mathcal L}({\mathcal H})$ and $ran T={\mathcal H}$. Then
there is an operator $B$ in ${\mathcal L}({\mathcal H})$ such that
$TB=I$ and $ran B=(ker T)^\bot$. Moreover, if an operator $B_1$ in
${\mathcal L}({\mathcal H})$ satisfies $TB_1=I$ and $ran B_1 \subset
(ker T)^\bot$ then $B_1=B$.
\end{lemma}

\begin{proof}
Let $\mathcal M=({\rm ker}T)^\bot$ and let $T_{\mathcal M}:\mathcal
M\rightarrow\mathcal H$ be the restriction of $T$ on $\mathcal M$,
i.e. $T_{\mathcal M}x=Tx$ for all $x\in\mathcal M$. Since
$T_{\mathcal M}$ is bijective, there is exactly a bounded linear
operator $B:\mathcal H\rightarrow\mathcal M$ such that $T_{\mathcal
M}B=I:=I_\mathcal H$ and $BT_\mathcal M=I_\mathcal M$. Thus, it is
easy to verify that $TB=I$ and ${\rm ran} B=\mathcal M=({\rm ker}
T)^\bot$.

Assume that $B_1$ is another operator satisfying $TB_1=I$ and ${\rm
ran} B_1 \subset({\rm ker} T)^\bot$. Then we have that $T(B-B_1)=0$.
Thus $(B-B_1)x$ is in ${\rm ker} T\cap({\rm ker} T)^\bot$, so
$Bx=B_1x$ for all $x\in\mathcal H$, that is $B=B_1$.
\end{proof}

In the sequel, we shall call the operator $B$ in Lemma \ref{Lemma:
Existence of Canonical Right Inverse} the {\it canonical right
inverse} of the operator $T$.

\begin{theorem}\label{Theorem: s_u is a cross-section of complex bundle of_T}
Let $T \in {\mathcal B}_n(\Omega)$, $z_0\in\Omega$ and let $B$ be
the canonical right inverse of $T-z_{0}$. Then, for every unit
vector $u$ in $\ker (T-z_{0})$, the  $\mathcal{H}$-valued
holomorphic function $s_u$ defined by
$$
s_{u}(z)=u+\sum_{k=1}^{\infty} B^{k}u\cdot (z-z_{0})^{k}
$$
is a local cross-section of the complex bundle $E_{T}$ on a
neighborhood $\Delta$ of $z_0$.
\end{theorem}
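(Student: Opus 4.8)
The plan is to establish two independent facts: that the power series defining $s_u$ converges on a suitable disc about $z_0$, so that $s_u$ is a well-defined holomorphic $\mathcal{H}$-valued function there, and that its value at each point lands in the correct fibre, i.e. $(T-z)s_u(z)=0$ for all $z$ in that disc.

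First I would dispose of convergence and holomorphy. Since $B\in\mathcal{L}(\mathcal{H})$ is bounded, we have $\|B^{k}u\|\le\|B\|^{k}$ for every $k$, so the vector series $\sum_{k\ge 0}B^{k}u\,(z-z_{0})^{k}$ (with $B^{0}=I$) is dominated in norm by the scalar geometric series $\sum_{k}\|B\|^{k}|z-z_{0}|^{k}$. It therefore converges absolutely and uniformly on every closed disc of radius $r<\|B\|^{-1}$; more sharply, $\limsup_{k}\|B^{k}u\|^{1/k}\le r(B)$ gives radius of convergence at least $r(B)^{-1}$. Choosing $\Delta$ to be an open disc centred at $z_{0}$ of radius smaller than both $\|B\|^{-1}$ and $\operatorname{dist}(z_{0},\partial\Omega)$ ensures $\Delta\subset\Omega$ and makes $s_{u}$ holomorphic on $\Delta$ as an $\mathcal{H}$-valued function.

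Next I would verify the fibre condition. Writing $A:=T-z_{0}$ and $w:=z-z_{0}$, we have $s_{u}(z)=\sum_{k\ge 0}B^{k}u\,w^{k}$ and $T-z=A-wI$. Because $A$ is bounded, it may be applied term by term to the norm-convergent series. The two algebraic inputs are $Au=0$ (as $u\in\ker(T-z_{0})$) and, from Lemma \ref{Lemma: Existence of Canonical Right Inverse}, $AB^{k}u=(AB)B^{k-1}u=B^{k-1}u$ for $k\ge 1$ since $AB=I$. Hence $A\,s_{u}(z)=\sum_{k\ge 1}B^{k-1}u\,w^{k}=w\sum_{j\ge 0}B^{j}u\,w^{j}=w\,s_{u}(z)$, which is precisely $(A-wI)s_{u}(z)=0$, i.e. $(T-z)s_{u}(z)=0$. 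Thus $s_{u}(z)\in\ker(T-z)$ for every $z\in\Delta$, and since $s_{u}(z_{0})=u\ne 0$, continuity keeps $s_{u}$ nonvanishing on a possibly smaller neighborhood, so $s_{u}$ is a genuine local cross-section of $E_{T}$.

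The computation itself is short, so the only point demanding real care is the justification for passing the bounded operator $A$ through the infinite sum and for reindexing the shifted series; both are legitimate precisely because the series converges absolutely in norm on $\Delta$, which is exactly what the first step secures. I expect this interchange to be the sole technical obstacle, everything else being formal.
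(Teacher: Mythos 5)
Your proof is correct and takes essentially the same route as the paper's: the identical term-by-term computation using $(T-z_{0})u=0$ and $(T-z_{0})B^{k}u=B^{k-1}u$, followed by reindexing to obtain $(T-z_{0})s_{u}(z)=(z-z_{0})s_{u}(z)$. The only difference is that you spell out the convergence estimate $\|B^{k}u\|\le\|B\|^{k}$ and the legitimacy of applying $T-z_{0}$ term by term, details the paper dismisses as clear.
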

\begin{proof}
Clearly the power series converges on an open disc $\Delta$ with the
center at $z_{0}$. Moreover we have

\begin{eqnarray*}
(T-z_{0})s_{u}(z)&=&(T-z_{0})\Big(u+\sum_{k=1}^{\infty} B^{k}u\cdot (z-z_{0})^{k}\Big) \nonumber\\
         &=&(T-z_{0})u+(T-z_{0})\Big(\sum_{k=1}^{\infty} B^{k}u\cdot (z-z_{0})^{k}\Big)\nonumber\\
         &=&(z-z_{0})\sum_{k=0}^{\infty} B^{k}u\cdot (z-z_{0})^{k}\nonumber \\
         &=&(z-z_{0})s_{u}(z).
\end{eqnarray*}

Hence, $Ts_{u}(z)=zs_{u}(z)$. That is, the vector $s_{u}(z)$ belongs
to $\ker(T-z)$ for all $z\in\Delta$.
\end{proof}

With above notations, we have

\begin{definition}\label{Definition: Canonical cross-section}
The cross-section $s_{u}$ is called the \it{canonical cross-section}
with the initial unit vector $u$ at the point $z_{0}$. Moreover, the
n-tuple $\{s_{e_{1}}, s_{e_{2}}, \cdots, s_{e_{n}}\}$ will be called
the \it{canonical n-tuple} related to an orthonormal basis
$\{e_{1},\cdots ,e_{n}\}$ of $\ker (T-z_{0})$ at the point $z_{0}$.
\end{definition}

\begin{remark}
Let $B$ be the canonical right inverse of $T-z_{0}$. Since ${\rm
ran} B=(\ker (T-z_{0}))^\bot$, it is easy to show that the family
$\{s_{e_{1}}(z), s_{e_{2}}(z), \cdots, s_{e_{n}}(z)\}$ is linear
independent for every $z$ near $z_{0}$.  Thus, it forms a basis of
$\ker(T-z)$.
\end{remark}

As an application of Lemma \ref{Lemma: Existence of Canonical Right
Inverse} and Theorem \ref{Theorem: s_u is a cross-section of complex
bundle of_T}, we will show the following corollary, which was raised
by M. J. Cowen and R. G. Douglas in \cite{Douglas1}. However, they
did not give a proof.

\begin{corollary}
In the difinition of Cowen-Douglas operator, the condition (3) can
be equivalently replaced by the condition
$\bigvee_{k=1}^{\infty}ker(T-z_0)^k=\mathcal H$ for a fixed $z_0$ in
$\Omega$.
\end{corollary}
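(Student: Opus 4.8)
The plan is to show that, under hypotheses (1), (2) and (4) alone, the two subspaces $\bigvee_{z\in\Omega}\ker(T-z)$ and $\bigvee_{k=1}^{\infty}\ker(T-z_0)^{k}$ are in fact the \emph{same} closed subspace of $\mathcal H$; once this identity is established, the equivalence of the two ``$=\mathcal H$'' conditions is immediate. I would route the argument through the canonical cross-sections of Theorem \ref{Theorem: s_u is a cross-section of complex bundle of_T}, proving that both candidate spans equal the closed linear span of the coefficient vectors $\{B^{k}e_i:k\ge 0,\ 1\le i\le n\}$, where $B$ is the canonical right inverse of $T-z_0$ and $\{e_1,\dots,e_n\}$ is an orthonormal basis of $\ker(T-z_0)$.

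First I would connect the coefficient vectors to the generalized eigenspaces. Since $(T-z_0)B=I$ one has $(T-z_0)B^{j+1}=B^{j}$, and because $e_i\in\ker(T-z_0)$ it follows that $(T-z_0)^{k+1}B^{k}e_i=0$, i.e. $B^{k}e_i\in\ker(T-z_0)^{k+1}$; hence $\bigvee\{B^{k}e_i\}\subseteq\bigvee_{k}\ker(T-z_0)^{k}$. For the reverse inclusion I would show by induction on $m$ that $\ker(T-z_0)^{m}\subseteq\operatorname{span}\{B^{j}e_i:0\le j\le m-1\}$: given $x\in\ker(T-z_0)^{m+1}$, the vector $(T-z_0)x$ lies in $\ker(T-z_0)^{m}$ and, by the inductive hypothesis, equals $\sum c_{ij}B^{j}e_i$; forming $z=\sum c_{ij}B^{j+1}e_i$ and using $(T-z_0)B^{j+1}=B^{j}$ gives $(T-z_0)(x-z)=0$, so $x-z\in\ker(T-z_0)$ and $x$ is a combination of the $B^{j}e_i$. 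This yields $\bigvee_{k=1}^{\infty}\ker(T-z_0)^{k}=\bigvee\{B^{k}e_i:k\ge 0,\ 1\le i\le n\}$.

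Next I would identify this coefficient span with a span of kernels over a small disc. By Theorem \ref{Theorem: s_u is a cross-section of complex bundle of_T} the canonical cross-sections $s_{e_i}(z)=\sum_{k\ge 0}B^{k}e_i(z-z_0)^{k}$ converge on a disc $\Delta\subseteq\Omega$ and, by the Remark following Definition \ref{Definition: Canonical cross-section}, form a basis of $\ker(T-z)$ for each $z\in\Delta$. Invoking the standard fact that for an $\mathcal H$-valued holomorphic function the closed linear span of its values coincides with the closed linear span of its Taylor coefficients (one inclusion from partial sums, the other from difference quotients computing the derivatives at $z_0$), I obtain
$$\bigvee_{z\in\Delta}\ker(T-z)=\bigvee_{1\le i\le n,\ z\in\Delta}\{s_{e_i}(z)\}=\bigvee\{B^{k}e_i:k\ge 0,\ 1\le i\le n\}=\bigvee_{k=1}^{\infty}\ker(T-z_0)^{k}.$$

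The remaining step, which I expect to be the main obstacle, is to pass from the disc $\Delta$ to all of $\Omega$, i.e. to prove $\bigvee_{z\in\Omega}\ker(T-z)=\bigvee_{z\in\Delta}\ker(T-z)$; only ``$\subseteq$'' is nontrivial, and I would obtain it by a connectedness argument. Fix $x^{*}$ orthogonal to $\bigvee_{z\in\Delta}\ker(T-z)$ and let $U$ be the set of $w\in\Omega$ having a neighborhood on which $x^{*}\perp\ker(T-z)$. Then $U$ is open and contains $\Delta$, hence is nonempty. To see $U$ is closed in $\Omega$, take $w$ in its closure and, using Theorem \ref{Theorem: s_u is a cross-section of complex bundle of_T} at $w$ together with the Remark, choose a holomorphic frame $\sigma_1,\dots,\sigma_n$ spanning $\ker(T-z)$ for every $z$ in a connected neighborhood $D$ of $w$. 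Some point of $U$ sits in $D$ with a sub-neighborhood $V\subseteq D$ on which $x^{*}\perp\ker(T-z)$, so each holomorphic scalar function $(\sigma_i(\cdot),x^{*})$ vanishes on $V$ and therefore on all of $D$ by the identity theorem; thus $x^{*}\perp\ker(T-z)$ near $w$ and $w\in U$. Connectedness of $\Omega$ forces $U=\Omega$, so $x^{*}\perp\ker(T-w)$ for every $w\in\Omega$. Passing to orthogonal complements gives the desired inclusion of spans. Combining the three steps establishes $\bigvee_{z\in\Omega}\ker(T-z)=\bigvee_{k=1}^{\infty}\ker(T-z_0)^{k}$, and the equivalence of the two forms of condition (3) follows at once.
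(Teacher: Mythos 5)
Your proof is correct, and although it passes through the same landmarks as the paper's proof---the canonical cross-sections $s_{e_i}$ of Theorem \ref{Theorem: s_u is a cross-section of complex bundle of_T}, their Taylor coefficients $B^k e_i$, and the reduction from $\Omega$ to a small disc $\Delta$---it differs at two substantive points. First, for placing $\ker(T-z_0)^{k+1}$ inside the span of the vectors $B^j e_i$, the paper argues by a dimension count: it asserts that the family $\{B^j e_i : 0\le j\le k,\ 1\le i\le n\}$ is linearly independent and combines this with the Fredholm-index fact $\dim\ker(T-z_0)^{k+1}=(k+1)n$ to conclude that the family is a basis of $\ker(T-z_0)^{k+1}$. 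Your induction---pulling $x\in\ker(T-z_0)^{m+1}$ back along the identity $(T-z_0)B^{j+1}=B^{j}$---proves the inclusion directly, with no independence claim and no index computation, so it is more elementary; it also lets you state the conclusion as an identity of subspaces, $\bigvee_{z\in\Omega}\ker(T-z)=\bigvee_{k\ge 1}\ker(T-z_0)^k$, which the paper's orthogonality bookkeeping in fact yields implicitly (its two directions give both inclusions of orthogonal complements) but never isolates. Second, for the passage from the disc $\Delta$ to all of $\Omega$, the paper simply cites Corollary 1.13 of Cowen--Douglas, while you reprove it by the open-closed connectedness argument with local holomorphic frames and the identity theorem; this is essentially the standard proof of the cited result, so you trade brevity for self-containedness. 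The remaining ingredient---that the closed span of the values of an $\mathcal H$-valued holomorphic function equals the closed span of its Taylor coefficients---is common to both arguments, the paper phrasing it through a fixed vector orthogonal to one span rather than through equality of spans.
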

\begin{proof}
Assume $\bigvee_{z\in\Omega}ker(T-z)=\mathcal H$. Take an
orthonormal basis $\{e_1,\ldots,e_n\}$ of ${\rm ker}(T-z_0)$. We
know that the vector family $\{s_{e_1}(z),\ldots,s_{e_n}(z)\}$ is a
basis of ${\rm ker}(T-z)$ for every $z\in \Delta$. Note that
$(T-z_0)^{k+1}B^ke_i=(T-z_0)e_i=0$, we have that $B^ke_i\in{\rm
ker}(T-z_0)^{k+1}$ for all $k\geq0$ and $i=1,2,\ldots,n$.

Now, let $\mathcal M=\bigvee_{k=1}^{\infty}{\rm ker}(T-z_0)^k$. If
$x\in\mathcal H$ and $x\bot M$, then $x\bot B^ke_i$. It follows that
$x\bot s_{e_i}(z)$ for $1 \leq i \leq n$ and $z\in\Delta$. Thus, we
have that $x\bot {\rm ker}(T-z)$ for $z\in\Delta$. By
\cite[Corollary 1.13]{Douglas3}, we know that
$$
\bigvee_{z\in\Delta}{\rm ker}(T-z)=\bigvee_{z\in\Omega}{\rm
ker}(T-z)=\mathcal H.
$$
So, $x=0$. This shows that $\mathcal M=\mathcal H$.

Conversely, assume that $\bigvee_{k=1}^{\infty}{\rm
ker}(T-z_0)^k=\mathcal H$. If $x\in\mathcal H $ and
$x\bot\bigvee_{z\in\Delta}{\rm ker}(T-z)$, then $x\bot e_i$ and
$x\bot s_{e_i}(z)$ for all $z\in\Delta$ and $1 \leq i \leq n$. Thus,
we obtain that
$$
0=(x,s_{e_i}(z))=(x,e_i)+\sum_{k=1}^{\infty}(x,B^ke_i)(z-z_0)^k,\,\forall
z\in\Delta.
$$
Hence, we have that $(x,B^ke_i)=0$ for all $k\geq0$ and
$i=1,2,\ldots,n$. Since $e_i\in{\rm ker}(T-z_0)$ and
$(T-z_0)B^ke_i=B^{k-1}e_i$, we can show by induction that, for each
$k\geq0$, the vector family
$$\{e_1,\ldots,e_n,\ldots,B^ke_1,\ldots,B^ke_n\}$$
in ${\rm ker}(T-z_0)^{k+1}$ is linearly independent. Note that
$\mathrm{dim\,ker}(T-z_0)^{k+1}=(k+1)n$, it follows that this family
is exactly a basis  of ${\rm ker}(T-z_0)^{k+1}$, which proves that
$x\bot{\rm ker}(T-z_0)^{k+1}$ for all $k\geq0$. So, $x=0$.
\end{proof}

From Definition \ref{Definition: Canonical cross-section}, we know
that there are many canonical n-tuples dependent on the choice of an
orthonormal basis $\{e_{1},e_{2},\cdots ,e_{n}\}$ of $\ker (T-z_0)$.
However, following lemma tells us that the class of canonical
n-tuples is small.

\begin{lemma}\label{Lemma: Canonical Cross-sections are not So Many}
Assume that $(s_{1},\cdots ,s_{n})$ and $(\tilde{s}_{1},\cdots
,\tilde{s}_{n})$ are canonical section tuples of $T \in {\mathcal
B}_n(\Omega)$ related to ONB $\{e_{1},e_{2},\cdots ,e_{n}\} $ and
$\{\tilde{e}_{1},\tilde{e}_{2},\cdots ,\tilde{e}_{n}\} $
respectively, then there is an unique unitary matrix $U \in U(n)$
such that
$$
(\tilde{s}_{1},\cdots ,\tilde{s}_{n})=(s_{1},\cdots ,s_{n})U.
$$
\end{lemma}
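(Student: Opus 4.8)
The plan is to exploit the fact that both canonical tuples are built from the \emph{same} data apart from the choice of orthonormal basis of $\ker(T-z_0)$. Since $(s_{1},\dots,s_{n})$ and $(\tilde s_{1},\dots,\tilde s_{n})$ are canonical section tuples of the same operator $T$ at the same point $z_0$, they are constructed from one and the same canonical right inverse $B$ of $T-z_0$, which by Lemma~\ref{Lemma: Existence of Canonical Right Inverse} is uniquely determined by $T-z_0$. Thus for all $i,j$,
$$
s_{i}(z)=e_{i}+\sum_{k=1}^{\infty}B^{k}e_{i}\,(z-z_0)^{k},\qquad
\tilde s_{j}(z)=\tilde e_{j}+\sum_{k=1}^{\infty}B^{k}\tilde e_{j}\,(z-z_0)^{k}.
$$

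First I would observe that the assignment $u\mapsto s_{u}$, read off the defining formula in Theorem~\ref{Theorem: s_u is a cross-section of complex bundle of_T}, is \emph{linear} in the initial vector $u$: the coefficient of $(z-z_0)^{k}$ is $B^{k}u$ (with $B^{0}=I$), and each $B^{k}$ is a linear operator, so $s_{\alpha u+\beta v}=\alpha s_{u}+\beta s_{v}$. This lets me extend $u\mapsto s_{u}$ to a linear map defined on all of $\ker(T-z_0)$, not merely on unit vectors.

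Next, since $\{e_{1},\dots,e_{n}\}$ and $\{\tilde e_{1},\dots,\tilde e_{n}\}$ are two orthonormal bases of the same $n$-dimensional fibre $\ker(T-z_0)$, elementary linear algebra produces a unique $U=(u_{ij})\in U(n)$ with $(\tilde e_{1},\dots,\tilde e_{n})=(e_{1},\dots,e_{n})U$, that is $\tilde e_{j}=\sum_{i}u_{ij}e_{i}$. Applying the linearity of $u\mapsto s_{u}$ then gives
$$
\tilde s_{j}=s_{\tilde e_{j}}=s_{\sum_{i}u_{ij}e_{i}}=\sum_{i}u_{ij}\,s_{i},
$$
which is precisely the desired matrix identity $(\tilde s_{1},\dots,\tilde s_{n})=(s_{1},\dots,s_{n})U$ with $U$ unitary.

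For uniqueness I would evaluate any relation of the required form at $z=z_0$: since $s_{i}(z_0)=e_{i}$ and $\tilde s_{j}(z_0)=\tilde e_{j}$, the identity $(\tilde s_{1},\dots,\tilde s_{n})=(s_{1},\dots,s_{n})U$ forces $\tilde e_{j}=\sum_{i}u_{ij}e_{i}$, and the linear independence of $\{e_{i}\}$ pins down the entries $u_{ij}$ uniquely; unitarity is automatic because $U$ is the transition matrix between two orthonormal bases. There is no serious obstacle here: the only points that need care are recognizing that both tuples share the \emph{same} right inverse $B$ and that the map $u\mapsto s_{u}$ is linear, after which the statement is a direct translation of the change-of-basis relation on $\ker(T-z_0)$.
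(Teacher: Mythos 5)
Your proof is correct and follows essentially the same route as the paper: both arguments take the unique unitary change-of-basis matrix $U$ on $\ker(T-z_0)$ and push it through the formula $s_{u}(z)=u+\sum_{k\ge 1}B^{k}u\,(z-z_0)^{k}$ using the linearity of $u\mapsto s_u$, which is legitimate precisely because both tuples are built from the one canonical right inverse $B$. Your treatment is in fact slightly more careful than the paper's, since you make the shared $B$ and the uniqueness of $U$ (via evaluation at $z=z_0$) explicit, whereas the paper leaves both implicit.
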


\begin{proof}It is clear that there is an unitary
matrix $U=(u_{ij})_{n \times n} \in U(n)$ such that
$$
 (\tilde{e}_{1},\tilde{e}_{2},\cdots ,\tilde{e}_{n})
=(e_{1},e_{2},\cdots,e_{n})U.
$$
Then we have
$$
\begin{array}{ll}
\tilde{s}_{i} & =\sum_{k=1}^{\infty} \tilde{e}_{i}+ B^{k}\tilde{e}_{i}                  \\
          & =\sum_{k=1}^{\infty} (\sum_{j=1}^{n} u_{ji}e_{j})
            +B^{k}(\sum_{j=1}^{n} u_{ji}e_{j})                               \\
          & =\sum_{j=1}^{n} u_{ji}(\sum_{k=1}^{\infty}e_{j}+B^{k}e_{j})     \\
          & =\sum_{j=1}^{n} u_{ji}s_{j} ,
\end{array}
$$
Or equivalently, we have
$(\tilde{s}_{1},\cdots,\tilde{s}_{n})=(s_{1},\cdots ,s_{n})U$.
\end{proof}

\begin{lemma}\label{Lemma: Uniqueness of the Canonical Cross-section}
Let $T \in {\mathcal B}_n(\Omega)$, $z_0\in\Omega$ and let $u$ be a
unit vector in $\ker (T-z_{0})$. Then there is exactly one
holomorphic cross-section $\gamma$ defined on a neighborhood
$\Delta$ of $z_0$ such that $\gamma(z_0)=u$ and
$\gamma(z)-u\in(\ker(T-z_0))^\bot$ for all $z\in\Delta$.
\end{lemma}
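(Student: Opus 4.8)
The plan is to obtain existence by recognizing the canonical cross-section $s_u$ of Theorem \ref{Theorem: s_u is a cross-section of complex bundle of_T} as the required $\gamma$, and then to establish uniqueness through a power-series coefficient comparison. Throughout, $B$ denotes the canonical right inverse of $T-z_0$, so that by Lemma \ref{Lemma: Existence of Canonical Right Inverse} we have $(T-z_0)B = I$ and $\operatorname{ran} B = (\ker(T-z_0))^\bot$.

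For existence I would simply verify that $\gamma := s_u$ has the stated properties. By Theorem \ref{Theorem: s_u is a cross-section of complex bundle of_T}, $s_u(z) = u + \sum_{k=1}^\infty B^k u\,(z-z_0)^k$ is a holomorphic cross-section on a disc $\Delta$ centered at $z_0$, and clearly $s_u(z_0) = u$. Moreover, each coefficient $B^k u = B(B^{k-1}u)$ lies in $\operatorname{ran} B = (\ker(T-z_0))^\bot$, and since $(\ker(T-z_0))^\bot$ is a closed subspace, the convergent sum $s_u(z) - u = \sum_{k=1}^\infty B^k u\,(z-z_0)^k$ lies in $(\ker(T-z_0))^\bot$ for every $z \in \Delta$. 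Thus $s_u$ satisfies both requirements.

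For uniqueness, let $\gamma$ be any holomorphic cross-section on a neighborhood of $z_0$ with $\gamma(z_0)=u$ and $\gamma(z)-u\in(\ker(T-z_0))^\bot$ for all $z$. Expanding $\gamma(z)=\sum_{k=0}^\infty a_k(z-z_0)^k$ with $a_k\in\mathcal H$, the initial condition gives $a_0=u$. Rewriting the cross-section equation $(T-z)\gamma(z)=0$ as $(T-z_0)\gamma(z)=(z-z_0)\gamma(z)$ and comparing coefficients of $(z-z_0)^m$ yields the recursion $(T-z_0)a_m=a_{m-1}$ for all $m\ge 1$. Next, the orthogonality hypothesis forces $a_k\in(\ker(T-z_0))^\bot$ for every $k\ge 1$: for any $g\in\ker(T-z_0)$ the scalar function $(\gamma(z)-u,g)$ vanishes identically on the neighborhood, so all of its Taylor coefficients $(a_k,g)$, $k\ge 1$, vanish, and since $g$ is arbitrary in $\ker(T-z_0)$ we conclude $a_k\bot\ker(T-z_0)$.

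Finally I would prove $a_k=B^k u$ by induction on $k$. The base case $a_0=u$ is already established. Assuming $a_{k-1}=B^{k-1}u$, both $a_k$ and $Ba_{k-1}$ lie in $\operatorname{ran} B=(\ker(T-z_0))^\bot$ and satisfy $(T-z_0)a_k=a_{k-1}=(T-z_0)(Ba_{k-1})$; hence $a_k-Ba_{k-1}\in\ker(T-z_0)\cap(\ker(T-z_0))^\bot=\{0\}$, giving $a_k=Ba_{k-1}=B^k u$. Therefore $\gamma=s_u$, which proves uniqueness. I expect the only delicate points to be the bookkeeping needed to pass from the pointwise orthogonality condition to the individual Taylor coefficients and to justify the coefficient comparison for the $\mathcal H$-valued power series; the decisive structural input is the triviality of $\ker(T-z_0)\cap(\ker(T-z_0))^\bot$ together with the uniqueness clause of Lemma \ref{Lemma: Existence of Canonical Right Inverse}.
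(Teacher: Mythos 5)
Your proposal is correct and takes essentially the same route as the paper: existence via the canonical cross-section $s_u$, and uniqueness by expanding $\gamma$ in a power series, deriving the recursion $(T-z_0)a_k=a_{k-1}$, showing each coefficient lies in $(\ker(T-z_0))^\bot$, and then concluding $a_k=Ba_{k-1}$ from $\ker(T-z_0)\cap(\ker(T-z_0))^\bot=\{0\}$. The only cosmetic difference is how coefficient orthogonality is extracted—you pair against a fixed $g\in\ker(T-z_0)$ and use vanishing of scalar Taylor coefficients, while the paper takes iterated difference-quotient limits such as $\lim_{z\to z_0}\frac{\gamma(z)-u}{z-z_0}$—but these are interchangeable bookkeeping steps.
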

\begin{proof}
By Theorem \ref{Theorem: s_u is a cross-section of complex bundle
of_T}, the canonical cross-section $s_u$ satisfies required
properties. For uniqueness, suppose that there is another
holomorphic cross-section $\gamma$ which has required properties.
Then let $\gamma$ have the following power series expansion
$$
\gamma(z)=u+\sum_{k=1}^{\infty} u_{k}(z-z_{0})^{k}.
$$
Since $(T-z_0)\gamma(z)=(z-z_0)\gamma(z)$, it can be checked that $
(T-z_{0})u=0$ and $(T-z_{0})u_{k}=u_{k-1}$ for $k\ge 1,$ where
$u_0=u$. Also, let $B\in{\mathcal L}({\mathcal H})$ be the canonical
right inverse of $T-z_0$. Then we have $Bu_{k-1}-u_{k} \in \ker
(T-z_{0})$. Note that
$$
\dfrac{\gamma(z)-u}{z-z_0}=\sum_{k=1}^{\infty}
u_{k}(z-z_{0})^{k-1}\in(\ker(T-z_0))^\bot,
$$
we obtain that
$$
u_1=\lim_{z\rightarrow
z_0}\dfrac{\gamma(z)-u}{z-z_0}\in(\ker(T-z_0))^\bot.
$$
Thus, $u_k\in(\ker(T-z_0))^\bot$ for all $k\geq1$ by induction. Now,
$$
Bu_{k-1}-u_{k} \in (\ker(T-z_{0}))^{\bot}\cap \ker(T-z_{0}),
$$
that is $Bu_{k-1}=u_{k}$. This implies $s_u(z)=\gamma(z)$ on some
neighborhood of $z_0$.
\end{proof}

In light of Lemma \ref{Lemma: Uniqueness of the Canonical
Cross-section}, we can give an equivalent description of the
canonical cross-section as follows.

\begin{proposition}\label{Definition: Eq Def of CS}
Let $T \in {\mathcal B}_n(\Omega)$, $z_0\in\Omega$ and let $u$ be a
unit vector in $\ker (T-z_{0})$. Then a holomorphic cross-section
$s_u$ of $E_{T}$ defined on a neighborhood $\Delta$ of $z_0$ is the
canonical cross-section with the initial vector $u$ if and only if
$s_u(z_0)=u$ and $s_u(z)-u\in(\ker(T-z_0))^\bot$ for all
$z\in\Delta$.
\end{proposition}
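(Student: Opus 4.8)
The plan is to prove the two implications of the biconditional separately, leaning heavily on the two preceding results. For the forward (``only if'') direction, I would start from the explicit power-series definition of the canonical cross-section given in Definition \ref{Definition: Canonical cross-section}, namely $s_u(z)=u+\sum_{k=1}^{\infty}B^{k}u\,(z-z_0)^{k}$, where $B$ is the canonical right inverse of $T-z_0$. Evaluating at $z=z_0$ immediately gives $s_u(z_0)=u$, so the first required property is trivial. For the second property, I would invoke Lemma \ref{Lemma: Existence of Canonical Right Inverse}, which tells us that $\operatorname{ran}B=(\ker(T-z_0))^\bot$; hence every term $B^{k}u$ with $k\ge 1$ lies in the closed subspace $(\ker(T-z_0))^\bot$. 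Since that subspace is closed, the convergent series $s_u(z)-u=\sum_{k=1}^{\infty}B^{k}u\,(z-z_0)^{k}$ also lies in $(\ker(T-z_0))^\bot$ for every $z\in\Delta$, which is exactly what is claimed.

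For the reverse (``if'') direction, the key observation is that the two stated conditions are precisely the hypotheses of Lemma \ref{Lemma: Uniqueness of the Canonical Cross-section}. That lemma asserts that there is \emph{exactly one} holomorphic cross-section $\gamma$ defined near $z_0$ with $\gamma(z_0)=u$ and $\gamma(z)-u\in(\ker(T-z_0))^\bot$. By the forward direction just established, the canonical cross-section $s_u$ is such a cross-section; therefore any holomorphic cross-section satisfying these two properties must coincide with $s_u$ on a neighborhood of $z_0$, i.e.\ it is the canonical cross-section with initial vector $u$. This closes the argument.

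I do not expect a genuine obstacle here, since the substantive content—the existence-and-uniqueness of a cross-section normalized so that its increment is orthogonal to the fibre at $z_0$—has already been carried out in Lemma \ref{Lemma: Uniqueness of the Canonical Cross-section}. The only point requiring minor care is the passage, in the forward direction, from ``each coefficient $B^k u$ lies in $(\ker(T-z_0))^\bot$'' to ``the whole series lies in $(\ker(T-z_0))^\bot$,'' which is justified by closedness of the orthogonal complement together with convergence of the power series on $\Delta$. Thus the proposition is essentially a clean reformulation of the earlier lemma, recasting its existence-uniqueness statement as an intrinsic characterization of the canonical cross-section that no longer refers to the operator $B$.
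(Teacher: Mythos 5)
Your proof is correct and is essentially the argument the paper intends: the forward direction follows from the explicit series for $s_u$ together with $\operatorname{ran}B=(\ker(T-z_0))^\bot$ (Lemma \ref{Lemma: Existence of Canonical Right Inverse}) and the closedness of that subspace, and the reverse direction is exactly the uniqueness assertion of Lemma \ref{Lemma: Uniqueness of the Canonical Cross-section}. The paper gives no separate proof precisely because it regards the proposition as this immediate consequence, so nothing further is needed.
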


In what follows let $\mathcal{G}r(n, \mathcal{H})$ denote the
Grassmann manifold consisting of all $n$-dimensional subspaces of
$\mathcal{H}$. A map $f: \Omega \rightarrow \mathcal{G}r(n,
\mathcal{H})$ is called holomorphic at $z_0\in\Omega$ if there
exists a neighborhood $\Delta$ of $z_0$ and $\mathcal{H}$-value
functions $\gamma_1,\cdots,\gamma_n$ such that
$f(z)=\bigvee\{\gamma_1(z),\cdots,\gamma_n(z)\}$ for $z\in\Delta$.
If $f: \Omega \rightarrow \mathcal{G}r(n, \mathcal{H})$ is
holomorphic map then $f$ induces a natural complex bundle $E_f$ as
follows:
$$
E_f=\{(x,z)\in\mathcal{H}\times\Omega:\,x\in f(z)\}.
$$
And, the projection $\pi:E_f\rightarrow\Omega$ is given by
$\pi(x,z)=z$. The bundle $E_f$ will be called the pull-back bundle
of the Grassmann manifold induced by $f$. In particular, for an
operator $T \in {\mathcal B}_n(\Omega)$, we can define $f: \Omega
\rightarrow \mathcal{G}r(n, \mathcal{H})$ by $f(z)=\ker(T-z)$ for
$z\in\Omega$, then $f$ is holomorphic and $E_f=E_T$.

The existence of the canonical cross-section is just a special case
of \cite[Lemma 2.4]{Douglas3} when $E_f=E_T$. We can prove that
Lemma \ref{Lemma: Uniqueness of the Canonical Cross-section} also
holds for the general holomorphic map.

\begin{theorem}\label{Theorem: Genaral Situation of Canonical Cross-Section}
Let $E_{f}$ is the pull-back bundle of $\mathcal{G}r(n,
\mathcal{H})$ induced by a holomorphic map $f: \Omega \rightarrow
Gr(n, \mathcal{H})$, and let $\{u_{1}, u_{2}, \cdots, u_{n}\}$ is an
orthonormal basis of the fibre $f(z_{0})$. Then there exist exactly
$n$ holomorphic cross-sections $\gamma_{1}, \gamma_{2}, \cdots,
\gamma_{n}$ of $E_{f}$ defined on some open disc $\Delta$ with the
center at $z_{0}$ such that

(1) $\gamma_{1}(z), \cdots, \gamma_{n}(z)$ form a basis of the fibre
$f(z)$ for $z\in \Delta$;

(2) $\gamma_{k}(z_{0})=u_{k}$ for $1\le k\le n$; and

(3) $(\gamma_{k}(z)-\gamma_{k}(z_{0}), \gamma_{j}(z_{0}))=0$  for
$1\le k, j\le n$ and  $z\in \Delta$.
\end{theorem}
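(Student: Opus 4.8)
The plan is to reduce the statement to the uniqueness of a lift through the orthogonal projection onto the fibre $f(z_0)$, mimicking the argument of Lemma \ref{Lemma: Uniqueness of the Canonical Cross-section} but using the holomorphic frame guaranteed by \cite[Lemma 2.4]{Douglas3} in place of the right inverse $B$.

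First I would recall from \cite[Lemma 2.4]{Douglas3} that there exist holomorphic cross-sections $\sigma_{1},\ldots,\sigma_{n}$ of $E_{f}$ on some disc $\Delta_{0}$ centered at $z_{0}$ which form a basis of $f(z)$ for every $z\in\Delta_{0}$; in particular $\{\sigma_{1}(z_{0}),\ldots,\sigma_{n}(z_{0})\}$ is a basis of $f(z_{0})$. The key observation is that condition (3) asserts exactly that $\gamma_{k}(z)-u_{k}\perp f(z_{0})$, since $\{\gamma_{j}(z_{0})\}=\{u_{j}\}$ spans $f(z_{0})$. Writing $P$ for the orthogonal projection of $\mathcal{H}$ onto $f(z_{0})$, conditions (2) and (3) together say precisely that $P\gamma_{k}(z)=u_{k}$ for all $z\in\Delta$.

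For the main construction I would define $\gamma_{k}(z)$ to be the unique vector in $f(z)$ whose $P$-image is $u_{k}$. To see this is well defined, note that $P$ restricted to $f(z)$ is an isomorphism onto $f(z_{0})$ for $z$ near $z_{0}$: at $z_{0}$ it is the identity on $f(z_{0})$, and by continuity of the frame it stays invertible on some smaller disc $\Delta\subset\Delta_{0}$. Concretely, expanding $\gamma_{k}(z)=\sum_{j}c_{jk}(z)\sigma_{j}(z)$, the requirement $P\gamma_{k}(z)=u_{k}$ is equivalent to $(\gamma_{k}(z),u_{i})=\delta_{ki}$ for all $i$, which is the linear system $M(z)C(z)=I$ with $M_{ij}(z)=(\sigma_{j}(z),u_{i})$ and $C=(c_{jk})$. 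The matrix $M(z)$ is holomorphic, and $M(z_{0})$ is invertible because $\{\sigma_{j}(z_{0})\}$ is a basis of $f(z_{0})$; hence $C(z)=M(z)^{-1}$ is holomorphic on $\Delta$. Consequently each $\gamma_{k}=\sum_{j}c_{jk}\sigma_{j}$ is a holomorphic cross-section of $E_{f}$, property (1) follows since the change of frame $C(z)$ is invertible at every $z\in\Delta$, and (2)–(3) hold by construction.

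Uniqueness is then immediate: if $\tilde{\gamma}_{1},\ldots,\tilde{\gamma}_{n}$ also satisfy (1)–(3), then $\tilde{\gamma}_{k}(z)\in f(z)$ and, as noted, (2)–(3) force $P\tilde{\gamma}_{k}(z)=u_{k}$; since $P|_{f(z)}$ is injective we get $\tilde{\gamma}_{k}(z)=\gamma_{k}(z)$ on $\Delta$. The step I expect to require the most care is verifying that $P|_{f(z)}$ is a linear isomorphism onto $f(z_{0})$ on a full neighborhood of $z_{0}$—equivalently, the invertibility of $M(z)$ near $z_{0}$—because this single fact simultaneously delivers the existence, the holomorphy, and the basis property; everything else is formal.
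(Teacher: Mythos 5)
Your proof is correct, but it follows a genuinely different route from the paper's. The paper splits the statement: existence of sections with properties (1)--(3) is quoted directly from \cite[Lemma 2.4]{Douglas3}, and the paper's own contribution is the uniqueness, proved by a power-series argument --- a putative second section $\gamma$ with $\gamma(z_0)=u_1$ is expanded in the canonical frame as $\gamma=h_1\gamma_1+\cdots+h_n\gamma_n$ with holomorphic coefficients, and the orthogonality relations $(a_i,u_j)=0$ among Taylor coefficients force $h_k\equiv 0$ for $k\ge 2$ (by inspecting the lowest-order coefficient of each $h_k$) and then $h_1\equiv 1$ by induction on the coefficients of $h_1$. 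You instead encode (2) and (3) as the single equation $P\gamma_k(z)=u_k$, where $P$ is the orthogonal projection onto $f(z_0)$, and reduce existence, holomorphy, the basis property (1), and uniqueness all to one fact: the invertibility near $z_0$ of the holomorphic matrix $M(z)=\bigl((\sigma_j(z),u_i)\bigr)$ attached to an arbitrary holomorphic frame $\sigma_1,\dots,\sigma_n$; existence and (1) come from $C(z)=M(z)^{-1}$, and uniqueness comes pointwise from injectivity of $P|_{f(z)}$, with no induction on coefficients. What your route buys: it is essentially self-contained, since you need from \cite{Douglas3} only a holomorphic frame --- which is already the paper's definition of holomorphy of $f$ --- rather than the full strength of Lemma 2.4, so you in effect reprove that lemma; and the uniqueness argument is immediate rather than coefficient-by-coefficient. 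What the paper's route buys: its computation runs parallel to the operator-theoretic Lemma \ref{Lemma: Uniqueness of the Canonical Cross-section}, keeps the role of the orthogonality relations (2.1) explicit, and delegates existence to the literature. One small remark applying equally to both proofs: ``exactly $n$'' must be read as uniqueness of germs at $z_0$; your identification holds wherever $M(z)$ is invertible, and then extends to any connected common domain by the identity theorem for $\mathcal{H}$-valued holomorphic functions.
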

\begin{proof}
It is easy to show that there exist $n$ holomorphic cross-sections
on a neighborhood of $z_{0}$ with the initial vector $u_{k}$  for
all $1\le k\le n$ respectively. Thus, the existence of holomorphic
cross-sections satisfying the properties (1), (2) and (3) is
directly from \cite[Lemma 2.4]{Douglas3}. For the uniqueness, it is
enough to prove

\begin{claim} For the unit vector $u_{1}$, the
cross-section $\gamma_{1}$ is the unique holomorphic cross-section
satisfying properties (1), (2) and (3).
\end{claim}

Assume that $\gamma(z)=u_{1} + \sum_{i=1}^{\infty}
a_{i}(z-z_{0})^{i}$ is another holomorphic cross-section defined on
$\Delta$ which satisfies properties (1), (2) and (3). And, let
$$
\gamma_{k}(z)=u_{k} +\sum_{i=1}^{\infty} a^{(k)}_{i}(z-z_{0})^{i}
$$
be the power series expansions of $\gamma_{k}$ on $\Delta$ for $1\le
k\le n$. By the property (2) we have
\begin{equation}\label{Equation: Orthogonal of Vectors Appearing in Canonical Secs}
(a_{i}, u_{j})=(a^{(k)}_{i}, u_{j})=0 \hbox{ for } i \ge 1 \hbox{
and } 1 \le j \le n.
\end{equation}
Now there are holomorphic functions $h_{1}(z), \cdots, h_{n}(z)$
defined $\Delta$ such that
$$
\gamma(z)=h_{1}(z)\gamma_{1}(z)+h_{2}(z)\gamma_{2}(z)+\cdots +h_{n}(z)\gamma_{n}(z)
$$
for all $z\in \Delta$. Clearly we have $h_{k}(z_{0})=0$ for $2 \leq
k \le n$ and $h_{1}(z_{0})=1$. Suppose that $h_{k}(z)\not\equiv 0$
for $2\le k\le n$. Then we can write
$$
h_{k}(z)=(z-z_{0})^{m}\tilde{h}_{k}(z), \hbox{ where }
\tilde{h}_{k}(z)=c_{0}+\sum_{i=1}^{\infty} c_{i}(z-z_{0})^{i} \hbox{
and } c_{0}\ne 0.
$$
However, by the product of power series, we can obtain that
$$a_m=c_0u_k+v.$$ By the formula $(2.1)$ and noting that $u_k\bot
u_l(k\neq l)$, we have that $u_k\bot v$. So, still from the formula
$(2.1)$, it follows that $$c_0=(c_0u_k,u_k)=(a_m,u_k)=0.$$ This
contradicts $c_0\neq0$. Now, we have that $\gamma=h_1\gamma_{1}$.
Let$$h_1(z)=1+\sum_{i=1}^\infty d_i(z-z_0)^i.$$ Then it follows that
$$a_1=a_1^{(1)}+d_1u_1.$$ Again, by the formula $(2.1)$, we get that
$$0=(a_1,u_1)=(a_1^{(1)},u_1)+(d_1u_1,u_1)=d_1.$$ So, $a_1=a_1^{(1)}$.
By induction, we have that $d_i=0$ for $i\geq 1$, that is
$\gamma=\gamma_1$.
\end{proof}

Above theorem say that together with lemma 2.4 of paper \cite{Douglas3} we can apply our lemma \ref{Lemma: Uniqueness of the Canonical Cross-section}
to get a more elegant way to obtain the canonical cross-section from a initial vector. We place that method in the next subsection for
readability.


\section{Cowen-Douglas operators are shifts on complete minimal sequences}

\subsection{Psedocanonical cross-sections}

$\quad$

Suppose $T \in \mathcal{B}_{n}(\Omega)$ and $\gamma$ be a
holomorphic cross-section of $E_{T}$ defined on a connected open
subset  $\Delta$ of $\Omega$. Call $\gamma$ to be {\it spanning }if
$\bigvee\{\gamma(z):\,z\in \Delta \}=\mathcal{H}$

Let $\gamma(z)=f_{0}+\sum_{k=1}^{\infty} f_{k}(z-z_{0})^{k}$ be the
power series expansion of $\gamma$ on an open disc $\Delta$ with the
center at $z_{0}$. Clearly, $\gamma$ is spanning if and only if
$\bigvee_{k=0}^\infty\{f_{k}\}=\mathcal{H}$, i.e., the vector
sequence $\{f_{k}\}_{k=0}^{\infty}$ is complete.

\begin{theorem}\label{Theorem: Holomorphic cross-section is spanning}
If $T \in \mathcal{B}_{1}(\Omega)$ then every holomorphic
cross-section $\gamma$ is spanning.
\end{theorem}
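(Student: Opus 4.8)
The plan is to prove the equivalent orthogonality statement: if a vector $x\in\mathcal H$ satisfies $x\perp\gamma(z)$ for every $z$ in the domain $\Delta$ of $\gamma$, then $x=0$. This is exactly the assertion that $\bigvee\{\gamma(z):z\in\Delta\}=\mathcal H$, i.e. that $\gamma$ is spanning. (One tacitly assumes $\gamma\not\equiv0$, since the zero section is trivially not spanning.) The two facts I would lean on are, first, that $n=1$, so every fibre $\ker(T-z)$ is one-dimensional and hence equals $\mathbb{C}\gamma(z)$ whenever $\gamma(z)\neq0$; and, second, the identity $\bigvee_{z\in\Delta}\ker(T-z)=\mathcal H$, which was already invoked in the proof of the Corollary above through \cite[Corollary 1.13]{Douglas3}.

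First I would fix an arbitrary $z_1\in\Delta$ and introduce a local \emph{nonvanishing} comparison section there: the canonical cross-section $s$ with an initial unit vector $u_1\in\ker(T-z_1)$, produced by Theorem \ref{Theorem: s_u is a cross-section of complex bundle of_T}. By the Remark following Definition \ref{Definition: Canonical cross-section}, $s(z)\neq0$ on a disc $\Delta_1$ centered at $z_1$ and spans $\ker(T-z)$ there. Since $n=1$, on $\Delta_1$ the cross-sections $\gamma$ and $s$ are proportional, so $\gamma(z)=\psi(z)s(z)$ for a scalar function $\psi$. I would check that $\psi$ is holomorphic by picking a fixed $w\in\mathcal H$ with $(w,s(z_1))\neq0$ and writing $\psi(z)=(w,\gamma(z))/(w,s(z))$: both numerator and denominator are holomorphic in $z$ (the inner product is linear in its second slot and $\gamma,s$ are holomorphic), and the denominator is nonzero near $z_1$.

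With this in hand the hypothesis $x\perp\gamma(z)$ gives $0=(x,\gamma(z))=\psi(z)\,(x,s(z))$ on $\Delta_1$. Because $\gamma\not\equiv0$, the holomorphic function $\psi$ is not identically zero (otherwise $\gamma\equiv0$ on $\Delta_1$, and then on the connected set $\Delta$ by the identity theorem), so its zeros are isolated; on the dense complement of these zeros the holomorphic function $z\mapsto(x,s(z))$ vanishes, and hence $(x,s(z))\equiv0$ on $\Delta_1$ by continuity. Evaluating at $z_1$ yields $(x,u_1)=0$, that is $x\perp\ker(T-z_1)$. Since $z_1\in\Delta$ was arbitrary, $x$ is orthogonal to $\bigvee_{z\in\Delta}\ker(T-z)=\mathcal H$, forcing $x=0$.

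The only genuinely delicate point is the treatment of the zeros of $\gamma$: a holomorphic cross-section can vanish at isolated points of $\Delta$, and at such points the naive identity $\ker(T-z)=\mathbb{C}\gamma(z)$ fails. Passing to the proportionality factor $\psi$ relative to a fixed nonvanishing canonical section $s$ localizes and tames these zeros, reducing the matter to the elementary fact that a scalar holomorphic function vanishing off a discrete set vanishes identically. Everything else is a routine application of the one-dimensionality of the fibres together with the spanning property $\bigvee_{z\in\Delta}\ker(T-z)=\mathcal H$.
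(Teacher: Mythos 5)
Your proof is correct and is essentially the paper's argument: assume $x\perp\gamma(z)$ for all $z\in\Delta$, upgrade this to $x\perp\ker(T-z)$ using $\dim\ker(T-z)=1$, and conclude from $\bigvee_{z\in\Delta}\ker(T-z)=\mathcal H$ (\cite[Corollary 1.13]{Douglas1}) that $x=0$. The one place you differ is that you justify the step from $x\perp\gamma(z)$ to $x\perp\ker(T-z)$ at the zeros of $\gamma$ by factoring $\gamma=\psi s$ through a nonvanishing local canonical section $s$; the paper's one-line proof performs this step without comment, which is literally valid only where $\gamma(z)\neq 0$, so your extra care (together with your explicit remark that $\gamma\not\equiv 0$ must be assumed, the statement being false for the zero section) closes a genuine, if minor, gap in the published proof.
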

\begin{proof}
Suppose that  $x \perp \gamma(z)$ for all $z \in\Delta$. It follows
that $x \perp\bigvee_{z\in\Delta}{\rm ker}(T-z)$ since
$\mathrm{dim\,ker}(T-z )=1$. By \cite[Corollary 1.13]{Douglas1}, we
know that $\bigvee_{z\in\Delta}{\rm ker}(T-z)=\mathcal H$. So,
$x=0$. This shows that $\bigvee\{\gamma(z):\,z\in \Delta
\}=\mathcal{H}$
\end{proof}

However, when $n>1$ any canonical cross-section
$\gamma(z)=f_{0}+\sum_{k=1}^{\infty} f_{k}(z-z_{0})^{k}$ never be
spanning since $f_{0}\in\ker(T-z_0)$ and
$f_{k}\in(\ker(T-z_0))^\bot$ for $k \geq 1$. To generalize above
theorem to the case $\mathcal{B}_{n}(\Omega)$, we need following
notion.

\begin{definition}\label{Definition: Psedocanonical cross section}
Let $T \in {\mathcal B}_n(\Omega)$ and $z_0\in\Omega$. A holomorphic
cross-section $\mu$ of the complex bundle $E_{T}$ defined on a
neighborhood $\Delta$ of $z_{0}$ is said to be psedocanonical if
$\mu(z_0)\not=0$ and $(\mu(z_{0}), \mu'(z))=0$ for all $z\in\Delta$.
\end{definition}

Suppose that $\mu(z)=f_{0}+\sum_{n=1}^{\infty} f_{n}(z-z_{0})^{n}$
is the power series expansion of $\mu$ at $z_0$. Then it is obvious
that $\mu$ is psedocanonical if and only if $f_{0}\bot f_k$ for all
$k\geq1$.

Clearly, a canonical cross-section  must be psedocanonical. If $T
\in \mathcal{B}_{1}(\Omega)$, by Proposition \ref{Definition: Eq Def
of CS}, they are equivalent.

\begin{proposition}\label{Proposition: Construction of psedocan cross section}
Let $T \in \mathcal{B}_{n}(\Omega)$ and  $\mu$ be a psedocanonical
cross-section, and let $\gamma$ be a canonical cross-section which
satisfy $(\mu(z_{0}), \gamma(z_{0}))=0$. Then for any holomorphic
function $g(z)$ defined near $z_{0}$ with $g(z_{0})=0$, the
cross-section $\lambda(z)=\mu(z)+g(z)\gamma(z)$ is also
psedocanonical.
\end{proposition}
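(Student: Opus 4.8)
Let me prove Proposition 2.17 by working directly with power series expansions at $z_0$, which is the cleanest way to verify the pseudocanonical condition ``$f_0 \bot f_k$ for all $k \ge 1$'' spelled out right after Definition 2.16.

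<br>

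First I would set up notation. Since $\mu$ is pseudocanonical, write $\mu(z) = f_0 + \sum_{k=1}^\infty f_k (z-z_0)^k$ with $f_0 \bot f_k$ for every $k \ge 1$, and $f_0 = \mu(z_0) \neq 0$. Since $\gamma$ is canonical with initial vector $\gamma(z_0)$, by Proposition 2.13 (the equivalent description of the canonical cross-section) we may write $\gamma(z) = h_0 + \sum_{k=1}^\infty h_k (z-z_0)^k$ where $h_0 = \gamma(z_0)$ and $h_k \in (\ker(T-z_0))^\bot$ for all $k \ge 1$; in particular each $h_k \bot h_0$. Finally expand the given scalar function as $g(z) = \sum_{j=1}^\infty c_j (z-z_0)^j$, using $g(z_0) = 0$.

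<br>

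The core observation is that $\lambda(z) = \mu(z) + g(z)\gamma(z)$ is genuinely a holomorphic cross-section of $E_T$, because $E_T$ has fibres that are linear subspaces: both $\mu(z)$ and $\gamma(z)$ lie in $\ker(T-z)$, hence so does the linear combination $\mu(z) + g(z)\gamma(z)$, and holomorphy is preserved under multiplication by the scalar holomorphic function $g$. So it remains only to check the two conditions in Definition 2.16 for $\lambda$. The constant term of $\lambda$ is $\lambda(z_0) = f_0 + g(z_0)\,h_0 = f_0 \neq 0$, which gives the first condition immediately. For the orthogonality condition, I compute the coefficient $\ell_k$ of $(z-z_0)^k$ in $\lambda$ for $k \ge 1$. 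The $\mu$-part contributes $f_k$, which satisfies $f_k \bot f_0$ by hypothesis. The $g\gamma$-part is a Cauchy product $\sum_{j\ge 1} c_j (z-z_0)^j \cdot \sum_{m \ge 0} h_m (z-z_0)^m$, whose coefficient of $(z-z_0)^k$ is $\sum_{j=1}^{k} c_j\, h_{k-j}$. I then take the inner product of $\ell_k$ with $\lambda(z_0) = f_0$. Since $f_k \bot f_0$, the $\mu$-part vanishes, and for the $g\gamma$-part I need $(h_{k-j}, f_0) = 0$ for each $1 \le j \le k$. The indices $k-j$ run over $0, 1, \dots, k-1$: when $k-j \ge 1$ we have $h_{k-j} \in (\ker(T-z_0))^\bot \ni$ and $f_0 \in \ker(T-z_0)$, so the inner product is zero; when $k-j = 0$ we have $h_0 = \gamma(z_0)$, and here the hypothesis $(\mu(z_0), \gamma(z_0)) = (f_0, h_0) = 0$ is exactly what kills that term.

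<br>

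Thus $(\ell_k, f_0) = 0$ for every $k \ge 1$, i.e. $(\lambda(z), \lambda(z_0))' $ vanishes identically, establishing that $\lambda$ is pseudocanonical. The only step that requires genuine care rather than routine bookkeeping is seeing why the single hypothesis $(\mu(z_0), \gamma(z_0)) = 0$ is both necessary and sufficient: it handles precisely the $m = 0$ term of the Cauchy product, which is the one place where the canonical cross-section's ``$h_m \in (\ker(T-z_0))^\bot$'' property fails (since $h_0$ lies \emph{inside} $\ker(T-z_0)$). Recognizing that all other cross-terms are automatically orthogonal to $f_0 \in \ker(T-z_0)$ because $\gamma$ is canonical is the key structural point, and it explains why the proposition is stated with a canonical (not merely pseudocanonical) $\gamma$.
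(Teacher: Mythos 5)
Your proof is correct, and it is a coefficient-level rendering of an argument the paper carries out at the level of functions and derivatives. The paper never expands anything in a power series: it observes that $(\mu(z_{0}),\gamma(z)-\gamma(z_{0}))=0$ because $\mu(z_{0})\in\ker(T-z_{0})$ while $\gamma$ is canonical, differentiates to get $(\mu(z_{0}),\gamma'(z))=0$, combines this with the hypothesis $(\mu(z_{0}),\gamma(z_{0}))=0$ to conclude $(\mu(z_{0}),\gamma(z))\equiv 0$, and then kills the three terms of $(\lambda(z_{0}),\lambda'(z))=(\mu(z_{0}),\mu'(z)+g'(z)\gamma(z)+g(z)\gamma'(z))$ one by one. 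You instead expand $\mu$, $\gamma$ and $g$ at $z_{0}$, form the Cauchy product, and verify the equivalent coefficient criterion (orthogonality of each Taylor coefficient $\ell_{k}$, $k\geq 1$, to $\lambda(z_{0})=f_{0}$), invoking Proposition \ref{Definition: Eq Def of CS} for the coefficients of $\gamma$ and the remark following Definition \ref{Definition: Psedocanonical cross section} for pseudocanonicity. The two proofs rest on exactly the same three facts---pseudocanonicity of $\mu$ handles its own contribution, the canonical property of $\gamma$ handles every cross term involving $h_{m}$ with $m\geq 1$, and the hypothesis handles the single remaining $h_{0}$ term---so the mathematical content is identical. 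The paper's version is shorter and avoids series bookkeeping; yours isolates explicitly the one place (the $m=0$ term of the Cauchy product) where the hypothesis $(\mu(z_{0}),\gamma(z_{0}))=0$ is indispensable, and also records two points the paper leaves implicit, namely that $\lambda$ is indeed a holomorphic cross-section of $E_{T}$ and that $\lambda(z_{0})=\mu(z_{0})\neq 0$.
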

\begin{proof}
By the definition of canonical cross-section, we have
$$
(\mu(z_{0}), \gamma(z)-\gamma(z_{0}))=0 \hbox{ and } (\mu(z_{0}),
\gamma'(z))=0
$$
since $\mu(z_{0}) \in \ker(T-z_{0}I)$. Hence, we have $(\mu(z_{0}),
\gamma(z))=(\mu(z_{0}), \gamma(z_{0}))=0$. Note that
$\lambda(z_{0})=\mu(z_{0})$, it follows that
\begin{eqnarray*}
(\lambda(z_{0}), \lambda'(z))&=&(\mu(z_{0}), \mu'(z)+g'(z)\gamma(z)+g(z)\gamma'(z)) \\
                             &=&(\mu(z_{0}), \mu'(z))+(\mu(z_{0}), g'(z)\gamma(z))+(\mu(z_{0}), g(z)\gamma'(z)) \\
                             &=&0.
\end{eqnarray*}
\end{proof}

\begin{theorem}\label{Theorem: Structure of Psedocanonical cross-section}
Let $T \in \mathcal{B}_{n}(\Omega)$, $z_{0} \in \Omega$ and let
$\lambda$ be a psedocanonical cross-section of $E_{T}$. Then there
are canonical cross-sections $\gamma_{1},\cdots, \gamma_{n}$ and
holomorphic functions $g_{2}, \cdots, g_{n}$ defined near $z_0$ such
that

(1) $(\gamma_{i}(z), \gamma_{j}(z_{0}))=\delta_{ij}$ for $1 \leq i,j
\leq n$ and  $\gamma_{1}(z_{0})=\lambda(z_{0})$;

(2) $g_{i}(z_{0})=0$ for $i=2, 3, \cdots, n$; and

(3) $\lambda(z)=\gamma_{1}(z)+\sum_{i=2}^{n} g_{i}(z)\gamma_{i}(z)$.

\end{theorem}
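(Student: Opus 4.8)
The plan is to take the $\gamma_i$ to be the canonical cross-sections attached to a well-chosen orthonormal basis of $\ker(T-z_0)$ and then to read off the functions $g_i$ from the expansion of $\lambda$, exploiting the orthogonality encoded in the canonical and psedocanonical conditions. Write $\lambda(z)=f_0+\sum_{k\ge1}f_k(z-z_0)^k$; since $\lambda$ is psedocanonical we have $f_0\ne0$ and $f_0\perp f_k$ for all $k\ge1$. Condition (1) with $i=j=1$, together with the requirement $\gamma_1(z_0)=\lambda(z_0)$, forces $\|f_0\|=1$, so I work under this normalization (which can always be arranged by scaling $\lambda$, an operation that preserves psedocanonicity). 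Extend $f_0$ to an orthonormal basis $\{u_1=f_0,u_2,\dots,u_n\}$ of $\ker(T-z_0)$ and let $\gamma_i=s_{u_i}$ be the canonical cross-section with initial vector $u_i$ supplied by Theorem \ref{Theorem: s_u is a cross-section of complex bundle of_T}.

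To check (1), recall from Proposition \ref{Definition: Eq Def of CS} that $\gamma_i(z)-u_i\in(\ker(T-z_0))^\perp$ while $\gamma_j(z_0)=u_j\in\ker(T-z_0)$; hence
$$(\gamma_i(z),\gamma_j(z_0))=(u_i,u_j)+(\gamma_i(z)-u_i,u_j)=(u_i,u_j)=\delta_{ij},$$
and in particular $\gamma_1(z_0)=u_1=f_0=\lambda(z_0)$, as required.

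For (2) and (3) I would use the Remark following Definition \ref{Definition: Canonical cross-section}: the family $\{\gamma_1(z),\dots,\gamma_n(z)\}$ is a basis of $\ker(T-z)$ for $z$ near $z_0$. Since $\lambda(z)\in\ker(T-z)$, write $\lambda(z)=\sum_{i=1}^n h_i(z)\gamma_i(z)$; pairing with $\gamma_j(z_0)$ and invoking (1) isolates
$$h_j(z)=(\gamma_j(z_0),\lambda(z)).$$
This is the crucial step, for it exhibits each $h_j$ as a holomorphic scalar function, being the inner product of the fixed vector $\gamma_j(z_0)$ with the holomorphic $\mathcal H$-valued map $\lambda$. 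It then remains to evaluate these coefficients: because $f_0\perp f_k$ for every $k\ge1$, we get $h_1(z)=(f_0,\lambda(z))=\|f_0\|^2=1$ identically, while for $i\ge2$ we get $h_i(z_0)=(u_i,f_0)=0$. Setting $g_i:=h_i$ for $2\le i\le n$ then yields (2), and the expansion $\lambda=\gamma_1+\sum_{i=2}^n g_i\gamma_i$ yields (3).

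The only point that is not pure bookkeeping is the holomorphy of the coefficient functions; the biorthogonality formula $h_j(z)=(\gamma_j(z_0),\lambda(z))$ disposes of it directly, avoiding any attempt to invert the $z$-dependent basis $\{\gamma_i(z)\}$. Everything else amounts to matching the canonical orthogonality $\gamma_i(z)-u_i\perp\ker(T-z_0)$ against the psedocanonical orthogonality $f_0\perp f_k$, which is exactly what forces $h_1$ to be the constant $1$ and the remaining coefficients to vanish at $z_0$.
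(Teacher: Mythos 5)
Your proposal is correct, and for the most part it retraces the paper's construction: the same canonical cross-sections $\gamma_i=s_{u_i}$ attached to an orthonormal basis $\{u_1=\lambda(z_0),u_2,\cdots,u_n\}$ of $\ker(T-z_0)$, the same verification of property (1), and ultimately the same coefficient functions, since your $h_j$ coincide with the paper's $g_j(z)=(\lambda(z),\gamma_j(z_0))$. Where you genuinely diverge is in how property (3) is established. The paper never expands $\lambda$ in the moving basis: it defines $g_i$ for $i\ge 2$ by the inner-product formula, forms the remainder $\mu(z)=\lambda(z)-\sum_{i=2}^{n}g_i(z)\gamma_i(z)$, checks that $\mu(z)-\lambda(z_0)\perp\ker(T-z_0)$ (psedocanonicity of $\lambda$ enters exactly for the $\gamma_1(z_0)$-component), and then invokes the uniqueness of canonical cross-sections (Lemma \ref{Lemma: Uniqueness of the Canonical Cross-section}, equivalently Proposition \ref{Definition: Eq Def of CS}) to conclude $\mu=\gamma_1$. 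You instead write $\lambda(z)=\sum_{i=1}^{n}h_i(z)\gamma_i(z)$ pointwise, which requires the Remark following Definition \ref{Definition: Canonical cross-section} (the canonical $n$-tuple is a basis of $\ker(T-z)$ for $z$ near $z_0$), obtain holomorphy of the coefficients from the biorthogonality (1), and compute $h_1\equiv 1$ directly from $f_0\perp f_k$. Your route trades the uniqueness lemma for the linear-independence remark and is slightly more economical, treating the $\gamma_1$-coefficient on the same footing as the others; the paper's route avoids needing the basis property of $\{\gamma_i(z)\}$ at points $z\neq z_0$. One small slip: for holomorphy you must place $\lambda$ in the linear slot of the inner product, i.e.\ $h_j(z)=(\lambda(z),\gamma_j(z_0))$ in the paper's convention; as you wrote it, $(\gamma_j(z_0),\lambda(z))$ is the complex conjugate, hence anti-holomorphic. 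This does not affect your evaluations $h_1\equiv 1$ and $h_i(z_0)=0$ (those values are real), and the fix is simply to swap the arguments.
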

\begin{proof}
Clearly we can assume $||\lambda(z_{0})||=1$. Take an orthonormal
basis $\{e_{1}, e_{2}, \cdots, e_{n}\}$ of $\ker(T-z_{0})$ such that
$e_{1}=\lambda(z_{0})$. From Definition \ref{Definition: Canonical
cross-section}, we set $\gamma_i=s_{e_i}$ which is the canonical
cross-section with the initial vector $\gamma_i(z_0)=e_i$ for
$i=1,2, \cdots, n$ respectively. Since
$\gamma_i(z)-\gamma_i(z_0)\bot\ker(T-z_0)$, we can obtain that
$$
(\gamma_i(z),\gamma_j(z_0))=(\gamma_i(z)-\gamma_i(z_0),\gamma_j(z_0))+(\gamma_i(z_0),\gamma_j(z_0))=(e_i,e_j)=\delta_{ij},
$$
that is the property $(1)$ holds.

Moreover, let
\begin{eqnarray}
g_{i}(z)=(\lambda(z), \gamma_{i}(z_{0})), ~ i=2,\cdots,n.
\end{eqnarray}
Then $g_i(z_0)=(e_1,e_i)=0$, that is the property $(2)$ is also
true.

And, we define $\mu(z)$ by
\begin{eqnarray}
\mu(z)=\lambda(z)-\sum_{i=2}^{n} g_{i}(z)\gamma_i(z).
\end{eqnarray}
Then, by the formula $(3.1)$ and the property $(1)$, we have that
\begin{eqnarray*}
(\mu(z), \gamma_{j}(z_{0})) & =&(\lambda(z),\gamma_j(z_0))-\sum_{i=2}^{n} g_{i}(z)(\gamma_{i}(z), \gamma_{j}(z_{0})) \\
                            & =&(\lambda(z),
                            \gamma_{j}(z_{0}))-g_j(z)=0.
\end{eqnarray*}
Hence, for $i=2, \cdots, n$, it follows that
$$
(\mu(z)-\lambda(z_{0}),\gamma_i(z_0))=(\mu(z),\gamma_i(z_0))-(\lambda(z_{0}),\gamma_i(z_0))=(e_1,e_i)=0.
$$
Also, by the formula $(3.2)$ and Definition \ref{Definition:
Psedocanonical cross section}, we can show that
\begin{eqnarray*}
(\mu(z)-\lambda(z_{0}), \gamma_{1}(z_{0})) & =&(\lambda(z)-\lambda(z_0),\gamma_1(z_0))-\sum_{i=2}^{n} g_{i}(z)(\gamma_{i}(z), \gamma_{1}(z_{0})) \\
                            & =&(\lambda(z)-\lambda(z_0),
                            \lambda(z_0))=0\\
\end{eqnarray*}
since $\lambda$ is psedocanonical. This implies that
$$
(\mu(z)-\lambda(z_{0})) \perp \ker(T-z_{0}).
$$
Note that $\lambda(z_0)=\mu(z_0)=\gamma_1(z_0)=e_1$, by Lemma
\ref{Lemma: Uniqueness of the Canonical Cross-section}, we have
$\mu=\gamma_1$. Thus, the property $(3)$ holds.
\end{proof}

\begin{lemma}\label{Lemma: Obtain CS in CD Method}
Let
$$
\lambda(z)=f_{0}+\sum_{k=1}^{\infty} f_{k}(z-z_{0})^{k}
$$
be a holomorphic cross-section of $E_{T}$ defined near $z_{0}\in
\Omega$ with $f_{0}\not=0$. Then there is a unique holomorphic
function $h$ defined near $z_{0}$ such that $\mu(z)=h(z)\lambda(z)$
is a psedocanonical cross-section of $E_{T}$ at $z_{0}$ and
$\mu(z_{0})=f_{0}$.
\end{lemma}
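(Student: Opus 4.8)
The plan is to look for $h$ as a scalar holomorphic factor and to collapse the whole requirement into a single scalar equation. First note that if $h$ is any scalar holomorphic function defined near $z_0$, then $\mu=h\lambda$ is automatically a holomorphic cross-section of $E_T$: since $\lambda(z)\in\ker(T-z)$, the scalar multiple $h(z)\lambda(z)$ again lies in $\ker(T-z)$. Thus only two things must be arranged, namely $\mu(z_0)=f_0$ and the pseudocanonical condition. From $\mu(z_0)=h(z_0)\lambda(z_0)=h(z_0)f_0$ together with $f_0\neq0$, the normalization $\mu(z_0)=f_0$ is equivalent to $h(z_0)=1$.

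Next I would linearize the pseudocanonical condition. Writing $\mu(z)=\sum_{n\geq0}g_n(z-z_0)^n$, so that $g_0=\mu(z_0)=f_0$, the characterization recorded just after Definition \ref{Definition: Psedocanonical cross section} says that $\mu$ is pseudocanonical exactly when $g_0\perp g_n$ for every $n\geq1$, i.e. $(g_n,f_0)=0$ for all $n\geq1$. Since the inner product is linear in its first slot (as in the paper's use of $(\lambda(z),\gamma_i(z_0))$ as holomorphic functions),
$$
(\mu(z),f_0)=\sum_{n\geq0}(g_n,f_0)(z-z_0)^n,
$$
so this holomorphic scalar function is constant if and only if $\mu$ is pseudocanonical. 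Now introduce $\phi(z)=(\lambda(z),f_0)$, which is holomorphic near $z_0$ and satisfies $\phi(z_0)=(f_0,f_0)=\|f_0\|^2\neq0$. Linearity in the first slot once more gives the key identity $(\mu(z),f_0)=h(z)\phi(z)$.

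Putting these together, $\mu=h\lambda$ is a pseudocanonical cross-section with $\mu(z_0)=f_0$ if and only if $h\phi$ is constant and $h(z_0)=1$; evaluating the constant at $z_0$ forces $h(z)\phi(z)\equiv\|f_0\|^2$. Because $\phi$ is holomorphic and nonvanishing on a suitable neighborhood of $z_0$, the function
$$
h(z)=\frac{\|f_0\|^2}{\phi(z)}=\frac{\|f_0\|^2}{(\lambda(z),f_0)}
$$
is holomorphic there and clearly satisfies $h(z_0)=1$. Conversely, this $h$ is forced by the equation $h\phi\equiv\|f_0\|^2$ together with the nonvanishing of $\phi$, which yields uniqueness.

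I do not expect a serious obstacle here; the one thing to get right is the observation that the pseudocanonical condition, which a priori is an infinite family of orthogonality relations among the Taylor coefficients $g_n$, collapses to the single statement that $(\mu(z),f_0)$ is constant. Once this is seen, dividing by the nonvanishing holomorphic function $\phi$ produces $h$ together with its holomorphy for free, avoiding any convergence estimate. A more computational route would instead solve the triangular recursion $\|f_0\|^2\,a_n=-\sum_{j=0}^{n-1}a_j(f_{n-j},f_0)$ for the coefficients $a_n$ of $h$, with $a_0=1$; this gives uniqueness of the formal series immediately but then demands a separate argument for its radius of convergence, which the division trick renders unnecessary.
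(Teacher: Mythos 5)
Your proposal is correct and follows essentially the same route as the paper: both define $h(z)=\|f_0\|^2/(\lambda(z),f_0)$, using that $(\lambda(z),f_0)$ is holomorphic and non-vanishing near $z_0$, and both obtain uniqueness from the fact that the pseudocanonical condition forces $(h(z)-\widetilde h(z))(\lambda(z),f_0)\equiv 0$. Your reformulation of the pseudocanonical condition as ``$(\mu(z),f_0)$ is constant'' is just a slightly more explicit packaging of the same computation the paper performs.
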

\begin{proof}
We consider the holomorphic function $g(z)=(\lambda(z),f_0)$. Since
$g(z_0)=||f_0||^2\neq0$, we know $g(z)\neq0$ near $z_0$. Hence, the
function $h(z)=\frac{||f_0||^2}{g(z)}$ is holomorphic near $z_0$.

Now, let $\mu(z)=h(z)\lambda(z)$. Then we have
$(\mu(z),f_0)\equiv||f_0||^2$. It is easy to check that $\mu(z)$ has
the form of the power series expansion as
$$
\mu(z)=f_0+\sum_{k=1}^{\infty}c_k(z-z_{0})^{k}.
$$
Thus, we obtain that
$$\Big(\sum_{k=1}^{\infty}c_k(z-z_{0})^{k},f_0\Big)=(\mu(z)-f_0,f_0)=(\mu(z),f_0)-||f_0||^2\equiv0,$$
which implies that $f_0\bot c_k$ for $k\geq1$. Therefore, $\mu(z)$
is a psedocanonical cross-section of $E_{T}$ at $z_{0}$ and
$\mu(z_{0})=f_{0}$.

To show the uniqueness, suppose that
$$
\widetilde{h}(z)=f_0+\sum_{k=1}^{\infty}\widetilde{c_k}(z-z_{0})^{k}
$$
is another holomorphic function defined near $z_0$ such that
$\widetilde{\mu}(z)=\widetilde{h}(z)\lambda(z)$ is also a
psedocanonical cross-section and
$\widetilde{\mu}(z_0)=\widetilde{h}(z_0)\lambda(z_0)=f_0$. Since
$f_0\bot c_k$ and $f_0\bot \widetilde{c_k}$ for $k\geq1$, we have
that
$$((h(z)-\widetilde{h}(z))\lambda(z),f_0)=(\mu(z)-\widetilde{\mu}(z),f_0)\equiv0.$$
It implies that $h(z)\equiv\widetilde{h}(z)$.
\end{proof}

\begin{corollary}
Let $T \in \mathcal{B}_{n}(\Omega)$, $z_{0} \in \Omega$ and let
$\lambda$ be a holomorphic cross-section of $E_{T}$. Then there are
canonical cross-sections $\gamma_{1},\cdots, \gamma_{n}$ and
holomorphic functions $g_{1}, \cdots, g_{n}$ defined near $z_0$ such
that

(1) $(\gamma_{i}(z), \gamma_{j}(z_{0}))=\delta_{ij}$ for $1 \leq i,j
\leq n$ and  $\gamma_{1}(z_{0})=\lambda(z_{0})$;

(2) $g_{i}(z_{0})=0$ for $i=2, 3, \cdots, n$; and

(3) $\lambda(z)=\sum_{i=1}^{n} g_{i}(z)\gamma_{i}(z)$.
\end{corollary}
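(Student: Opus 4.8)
The plan is to reduce the general case to the psedocanonical case already settled in Theorem \ref{Theorem: Structure of Psedocanonical cross-section}, using Lemma \ref{Lemma: Obtain CS in CD Method} as the bridge. Note first that condition (1) with $i=j=1$ forces $\|\gamma_1(z_0)\|=\|\lambda(z_0)\|=1$, so the statement is really about a holomorphic cross-section with unit initial value; accordingly I would assume $\lambda(z_0)=f_0\neq 0$ and, after the harmless rescaling used in the theorem, $\|f_0\|=1$.

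First I would write $\lambda(z)=f_0+\sum_{k\ge 1}f_k(z-z_0)^k$ and apply Lemma \ref{Lemma: Obtain CS in CD Method} to obtain the unique holomorphic scalar function $h$ defined near $z_0$ for which $\mu(z):=h(z)\lambda(z)$ is psedocanonical with $\mu(z_0)=f_0$. Evaluating $\mu(z_0)=h(z_0)\lambda(z_0)=h(z_0)f_0=f_0$ and using $f_0\neq 0$ gives $h(z_0)=1$; in particular $h$ is nonvanishing on a (possibly smaller) neighborhood of $z_0$, so $1/h$ is again holomorphic there. This invertibility of $h$ is the single point that genuinely needs checking, and it is supplied precisely by $h(z_0)=1$.

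Next I would invoke Theorem \ref{Theorem: Structure of Psedocanonical cross-section} applied to the psedocanonical cross-section $\mu$: it produces canonical cross-sections $\gamma_1,\cdots,\gamma_n$ and holomorphic functions $\tilde{g}_2,\cdots,\tilde{g}_n$ with $\tilde{g}_i(z_0)=0$, satisfying $(\gamma_i(z),\gamma_j(z_0))=\delta_{ij}$, $\gamma_1(z_0)=\mu(z_0)=f_0=\lambda(z_0)$, and $\mu(z)=\gamma_1(z)+\sum_{i=2}^{n}\tilde{g}_i(z)\gamma_i(z)$. These are exactly the $\gamma_i$ demanded by the corollary, so condition (1) holds verbatim and the equality $\gamma_1(z_0)=\lambda(z_0)$ is recorded.

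Finally I would divide the displayed identity for $\mu$ by $h$. Setting $g_1(z):=1/h(z)$ and $g_i(z):=\tilde{g}_i(z)/h(z)$ for $2\le i\le n$ — all holomorphic near $z_0$ since $1/h$ is — yields
$$ \lambda(z)=\frac{\mu(z)}{h(z)}=g_1(z)\gamma_1(z)+\sum_{i=2}^{n}g_i(z)\gamma_i(z)=\sum_{i=1}^{n}g_i(z)\gamma_i(z), $$
which is condition (3). For condition (2), $g_i(z_0)=\tilde{g}_i(z_0)/h(z_0)=0/1=0$ for $i\ge 2$, while $g_1(z_0)=1/h(z_0)=1$ (a value consistent with, though not required by, the statement). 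I do not expect any substantive obstacle here: the whole corollary is the composition Lemma \ref{Lemma: Obtain CS in CD Method} $\to$ Theorem \ref{Theorem: Structure of Psedocanonical cross-section} $\to$ division by the unit $h$, and the only care required is the normalization of $\lambda(z_0)$ and the nonvanishing of $h$ near $z_0$.
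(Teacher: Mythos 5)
Your proposal is correct and follows essentially the same route as the paper's own proof: apply Lemma \ref{Lemma: Obtain CS in CD Method} to produce the unit $h$ with $\mu=h\lambda$ psedocanonical, feed $\mu$ into Theorem \ref{Theorem: Structure of Psedocanonical cross-section}, and then divide by $h$ (using $h(z_0)=1$ to invert it near $z_0$), setting $g_1=1/h$ and $g_i=\widetilde{g}_i/h$. Your explicit remark that the statement implicitly requires $\lambda(z_0)\neq 0$ (indeed $\|\lambda(z_0)\|=1$ for condition (1) to be literally attainable) is a point the paper leaves unsaid, but it does not change the argument.
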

\begin{proof}
By Lemma \ref{Lemma: Obtain CS in CD Method}, we know that there is
a holomorphic function $h$ defined near $z_{0}$ such that
$\mu(z)=h(z)\lambda(z)$ is a psedocanonical cross-section of $E_{T}$
at $z_{0}$ and $\mu(z_{0})=\lambda(z_{0})$. Thus, it follows that
there are canonical cross-sections $\gamma_{1},\cdots, \gamma_{n}$
and holomorphic functions $\widetilde{g}_{2}, \cdots,
\widetilde{g}_{n}$ defined near $z_0$ such that the property $(1)$
holds, $\widetilde{g}_{i}(z_{0})=0$ for $i=2, 3, \cdots, n$, and
$$
 \mu(z)=h(z)\lambda(z)=\gamma_{1}(z)+\sum_{i=2}^{n} \widetilde{g}_{i}(z)\gamma_{i}(z).
$$
Since $h(z_0)=1$, the function $\frac{1}{h(z)}$ is holomorphic near
$z_0$. Now, set $g_{1}(z)=\frac{1}{h(z)}$ and
$g_{i}(z)=\frac{\widetilde{g}_{i}(z)}{h(z)}$. Then the properties
$(2)$ and $(3)$ hold.
\end{proof}

\subsection{Proof of main Theorem \ref{Theorem: CD Operators are shift on C-M sequence}}
\begin{theorem}\label{Theorem: Coefficent sequence of psedocanonical section is Minimal}
Let $T \in \mathcal{B}_{n}(\Omega)$, $z_{0} \in \Omega$ and let
$\mu(z)=f_{0}+\sum_{k=1}^{\infty} f_{k}(z-z_{0})^{k}$ be a
holomorphic cross-section defined near $z_{0}$ with $f_{0}\not= 0$.
Then the vector sequence $\{f_{k}\}_{k=0}^{\infty}$ is minimal if
and only if $f_{0} \not\in \bigvee_{k=1}^\infty\{f_{k}\}$. In
particular, when $\mu$ is a psedocanonical cross-section,
$\{f_{k}\}_{k=0}^{\infty}$ is always minimal.
\end{theorem}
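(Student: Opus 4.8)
The plan is to first extract the shift structure that is hidden in the cross-section equation, then use it to reduce minimality at an arbitrary index to the single hypothesis at index $0$, and finally read off the psedocanonical case from a one-line orthogonality observation.

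First I would exploit that $\mu$ is a holomorphic cross-section of $E_T$, so $T\mu(z)=z\mu(z)$, equivalently $(T-z_0)\mu(z)=(z-z_0)\mu(z)$ on $\Delta$. Substituting the power series $\mu(z)=f_0+\sum_{k\ge1}f_k(z-z_0)^k$ and comparing coefficients of $(z-z_0)^k$ yields $(T-z_0)f_0=0$ and $(T-z_0)f_k=f_{k-1}$ for $k\ge1$; that is, $T-z_0$ acts as a backward shift on $\{f_k\}_{k\ge0}$. Consequently, for the bounded operator $(T-z_0)^n$ one has $(T-z_0)^nf_k=f_{k-n}$ when $k\ge n$ and $(T-z_0)^nf_k=0$ when $0\le k<n$. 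These identities are the engine of the whole argument, and they are exactly what distinguishes this situation from a general sequence (for an arbitrary sequence, minimality at index $0$ does not force minimality at every index).

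For the equivalence, the forward implication is immediate: minimality means $f_m\notin\bigvee_{k\ne m}\{f_k\}$ for every $m$, so the case $m=0$ gives $f_0\notin\bigvee_{k\ge1}\{f_k\}$. For the converse I would assume $f_0\notin\bigvee_{k\ge1}\{f_k\}$ and verify minimality at each index $n$. The case $n=0$ is the hypothesis itself. For $n\ge1$, suppose toward a contradiction that $f_n\in\bigvee_{k\ne n}\{f_k\}$, i.e.\ $f_n$ is a limit of finite linear combinations of $\{f_k:k\ne n\}$. Applying the continuous operator $(T-z_0)^n$ and using the shift identities, the terms with $k<n$ are annihilated while the terms with $k>n$ become linear combinations of $\{f_{k-n}:k>n\}=\{f_j:j\ge1\}$; passing to the limit gives $(T-z_0)^nf_n=f_0\in\bigvee_{k\ge1}\{f_k\}$, contradicting the hypothesis. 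Hence $\{f_k\}_{k\ge0}$ is minimal.

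Finally, for the ``in particular'' clause I would invoke the remark characterizing psedocanonical cross-sections: $\mu$ is psedocanonical precisely when $f_0\bot f_k$ for all $k\ge1$. Then $f_0$ is orthogonal to the closed subspace $\bigvee_{k\ge1}\{f_k\}$, and since $f_0\ne0$ it cannot lie in that subspace (otherwise $f_0\bot f_0$, forcing $f_0=0$); the equivalence just proved then gives minimality. I do not expect a serious obstacle here: the only point requiring care is the continuity step in the converse, namely that $(T-z_0)^n$ carries the closed span of $\{f_k:k\ne n\}$ into the closed span of its image, which legitimizes concluding $f_0\in\bigvee_{k\ge1}\{f_k\}$ rather than membership in a merely algebraic span.
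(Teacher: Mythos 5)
Your proposal is correct and takes essentially the same route as the paper: both extract the backward-shift relations $(T-z_0)f_k=f_{k-1}$, $(T-z_0)f_0=0$ from the cross-section equation, and then apply the bounded operator $(T-z_0)^n$ to finite approximants of $f_n$ from $\bigvee_{k\neq n}\{f_k\}$, using continuity to force $f_0\in\bigvee_{k\geq1}\{f_k\}$ and reach a contradiction. The only cosmetic differences are that you phrase the limiting step abstractly (closed spans map into closed spans) where the paper writes an explicit norm estimate $\|f_0-(T-z_0)^k v_n\|\leq\|T-z_0\|^k\frac{1}{n}$, and that you spell out the orthogonality argument for the psedocanonical case, which the paper leaves implicit.
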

\begin{proof}
We just need to show the part of ``if''. Let
$\mathcal{H}_{\mu}^{1}=\bigvee_{k=1}^\infty\{f_{k}\}$ and $f_{0}
\not\in \mathcal{H}_{\mu}^{1}$. Since $\mu(z)\in\ker(T-z)$ for all
$z$, it follows that
$$
\sum_{k=0}^\infty f_k(z-z_0)^{k+1}=(T-z_0)\mu(z)=\sum_{k=1}^\infty
(T-z_0)f_k(z-z_0)^{k}.
$$
This implies that $(T-z_0)f_k=f_{k-1}$ for $k\geq1$. Hence we have
$(T-z_0)^kf_k=f_0$.

If the statement is false, then there exist some $f_k$ and vector
sequence $\{v_n\}_{n=1}^\infty$ such that $v_n$ is a linear
combination of finite vectors in the set $\{f_n: n\neq k\}$ and
$||v_n-f_k||<\frac{1}{n}$. Moreover, we can write that
$$
v_{n}=v_{n}^{(1)}+v_{n}^{(2)}, v_{n}^{(1)}=\sum_{j=1}^{k-1}
\alpha_{j}f_{j}~ {\rm and} ~ v_{n}^{(2)}=\sum_{j=k+1}^{m_{n}}
\alpha_{j}f_{j}.
$$
Thus, we get that
$$
(T-z_0)^{k}v_{n}^{(1)}=0 ~{\rm and}~ (T-z_0)^{k}v_{n}^{(2)} \in
\mathcal{H}_{\mu}^{1}.
$$ It gives that $ (T-z_0)^{k}v_{n} \in
\mathcal{H}_{\mu}^{1}$ and
$$
||f_{0}-(T-z_0)^{k}v_{n}||=||(T-z_0)^{k}(f_k-v_{n})||\leq||T-z_0||^{k}\frac{1}{n}\rightarrow0(n\rightarrow\infty).
$$
Hence, it follows that $ f_{0} \in \mathcal{H}_{\mu}^{1}$, a
contradiction.
\end{proof}

Next, we can generalize Theorem \ref{Theorem: Holomorphic
cross-section is spanning} as follows.

\begin{theorem}\label{Theorem: Existence of complete minimal coes psedocanonical cross section}
Let $T \in \mathcal{B}_{n}(\Omega)$. Then for each $z_0\in\Omega$,
there is a spanning psedocanonical cross-section.
\begin{eqnarray}
\lambda(z)=f_{0}+\sum_{k=1}^{\infty} f_{k}(z-z_{0})^{k}
\end{eqnarray}
defined near $z_0$. Hence, the vector sequence
$\{f_{k}\}_{k=0}^{\infty}$ is complete.
\end{theorem}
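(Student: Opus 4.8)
The plan is to exhibit $\lambda$ explicitly as a combination of a canonical $n$-tuple, using analytically rigid coefficient functions. Fix an orthonormal basis $\{e_1,\dots,e_n\}$ of $\ker(T-z_0)$, let $B$ be the canonical right inverse of $T-z_0$, and let $\gamma_i=s_{e_i}$ be the canonical cross-sections of Definition \ref{Definition: Canonical cross-section}, so that $\gamma_i(z)=\sum_{k\ge0}B^ke_i\,(z-z_0)^k$ on a common disc $\Delta_\rho$ about $z_0$. I would look for $\lambda$ in the form $\lambda=\gamma_1+\sum_{i=2}^n g_i\gamma_i$ with $g_i$ holomorphic and $g_i(z_0)=0$. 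Iterating Proposition \ref{Proposition: Construction of psedocan cross section} (at the $k$-th step the partial sum has value $e_1$ at $z_0$, which is orthogonal to $\gamma_{k+1}(z_0)=e_{k+1}$) shows that every such $\lambda$ is automatically psedocanonical, so the only real task is to make it spanning; completeness of the coefficient sequence $\{f_k\}$ is then exactly the spanning property recorded just before the statement. I will also use the Corollary of Section 2, by which $\bigvee_k\ker(T-z_0)^k=\mathcal H$ and the family $\{B^ke_i:k\ge0,\,1\le i\le n\}$ is total in $\mathcal H$.

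Everything hinges on the choice of the $g_i$. A vector $x$ is orthogonal to every $\lambda(z)$ precisely when the holomorphic identity $q_1^x+\sum_{i=2}^n g_i\,q_i^x\equiv 0$ holds near $z_0$, where $q_i^x(z):=(\gamma_i(z),x)=\sum_k(B^ke_i,x)(z-z_0)^k$. Each $q_i^x$ is holomorphic on the fixed disc $\Delta_\rho$, and by totality $x=0$ iff all $q_i^x\equiv0$. The difficulty is that if the $g_i$ extend analytically across a full neighbourhood of $z_0$ one can in general solve this identity with a nonzero $x$, so the coefficients must be chosen so rigidly that no nontrivial relation can survive. I would therefore take the $g_i$ to be lacunary (Hadamard-gap) series $g_i(z)=\sum_{j\ge1}r_i^{-2^j}(z-z_0)^{2^j}$, which vanish at $z_0$, are holomorphic exactly on $\Delta_{r_i}$, and have $|z-z_0|=r_i$ as a natural boundary, with the radii chosen strictly decreasing, $0<r_n<r_{n-1}<\cdots<r_2<\rho$. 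Then $\lambda=\gamma_1+\sum_{i\ge2}g_i\gamma_i$ is a holomorphic cross-section on the smallest disc $\Delta_{r_n}$.

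The heart of the argument — and the step I expect to be the main obstacle — is to show that with these $g_i$ the identity $q_1^x+\sum_{i\ge2}g_iq_i^x\equiv0$ forces every $q_i^x\equiv0$. I would peel the coefficients off one at a time, beginning with the most singular. Writing $g_nq_n^x=-q_1^x-\sum_{i=2}^{n-1}g_iq_i^x$, the right-hand side is holomorphic on $\Delta_{r_{n-1}}$, a disc strictly larger than $\Delta_{r_n}$; if $q_n^x\not\equiv0$ then $g_n=(g_nq_n^x)/q_n^x$ would extend meromorphically past $|z-z_0|=r_n$, and a boundary point of $\Delta_{r_n}$ that is not a zero of $q_n^x$ would yield a genuine holomorphic continuation across the circle of convergence, contradicting the natural boundary. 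Hence $q_n^x\equiv0$; the identity drops to $n-1$ terms, holds on all of $\Delta_{r_{n-1}}$ by analytic continuation, and the same peeling removes $q_{n-1}^x,\dots,q_2^x$ in turn, finally leaving $q_1^x\equiv0$. Thus all $q_i^x\equiv0$, so $(B^ke_i,x)=0$ for every $k,i$ and therefore $x=0$.

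Consequently $\bigvee_{z\in\Delta_{r_n}}\{\lambda(z)\}=\mathcal H$, so $\lambda$ is a spanning psedocanonical cross-section and its coefficient sequence $\{f_k\}$ is complete (and, by Theorem \ref{Theorem: Coefficent sequence of psedocanonical section is Minimal}, also minimal). The delicate points to verify carefully are the strict decrease of the radii — needed so that transposing one term genuinely enlarges the disc of holomorphy and lets the peeling proceed — together with the standard fact that a natural boundary obstructs even meromorphic continuation; the remaining verifications (that $\lambda$ is a cross-section, is psedocanonical, and has $\lambda(z_0)=e_1\ne0$) are routine.
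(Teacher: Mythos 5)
Your proof is correct, but it takes a genuinely different route from the paper's. The paper gets the spanning property from a cited external result: by \cite[Theorem A]{Zhu} there is a holomorphic cross-section $\widetilde{\lambda}$ on all of $\Omega$ with $\bigvee\{\widetilde{\lambda}(z):z\in\Omega\}=\mathcal{H}$ and $\widetilde{\lambda}(z_0)\neq0$; a localization argument (imitating \cite[Corollary 1.13]{Douglas1}) shows $\widetilde{\lambda}$ already spans over any open $\Delta\subset\Omega$; and then Lemma \ref{Lemma: Obtain CS in CD Method} produces a scalar holomorphic $h$, nonvanishing near $z_0$, so that $\lambda=h\widetilde{\lambda}$ is psedocanonical, the spanning property being unchanged under multiplication by a nonvanishing scalar function. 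You instead build the section explicitly from the canonical $n$-tuple, $\lambda=\gamma_1+\sum_{i\ge2}g_i\gamma_i$, with the $g_i$ Hadamard-lacunary series whose circles of convergence have strictly decreasing radii and are natural boundaries, and you kill any annihilating vector $x$ by peeling: if $q_n^x\not\equiv0$, then $g_n=(g_nq_n^x)/q_n^x$ would continue holomorphically across a point of the circle $|z-z_0|=r_n$ where $q_n^x\neq0$ (such points exist, since $q_n^x$ is holomorphic on the strictly larger disc and has only finitely many zeros on that compact circle), contradicting the Ostrowski--Hadamard gap theorem; the identity theorem then propagates the shortened relation to the next larger disc, induction removes $q_{n-1}^x,\dots,q_2^x$, and finally $q_1^x\equiv0$, so $(B^ke_i,x)=0$ for all $k,i$ and $x=0$ by the totality of $\{B^ke_i\}$ established in the Corollary of Section 2. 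I checked the supporting details---the automatic psedocanonicity from iterating Proposition \ref{Proposition: Construction of psedocan cross section}, the strict decrease of the radii that makes each transposition genuinely enlarge the domain of holomorphy, and the passage from orthogonality to the holomorphic identity $(\lambda(z),x)\equiv0$---and they all hold. As for what each approach buys: the paper's proof is short but non-constructive and leans on Zhu's nontrivial theorem, while yours is completely explicit and self-contained modulo a classical fact of complex analysis, and in effect re-proves a local version of Zhu's spanning-section theorem adapted to psedocanonical sections; the only loss is globality, since your section lives only on the small disc $\Delta_{r_n}$, which is all the theorem asks for (``defined near $z_0$'').
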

\begin{proof}
By \cite[Theorem A]{Zhu}, there is a  holomorphic cross-section
$\widetilde{\lambda}$ on $\Omega$ such that
$\bigvee\{\widetilde{\lambda}(z):z\in\Omega\}=\mathcal{H}$ and
$\widetilde{\lambda}(z_0)\neq0$. Imitating the proof of
\cite[Corollary 1.13]{Douglas1}, for every open set $\Delta$ in
$\Omega$, we can show that
$\bigvee\{\widetilde{\lambda}(z):z\in\Delta\}=\mathcal{H}$.

Now, by Lemma \ref{Lemma: Obtain CS in CD Method}, there is a
holomorphic function $h(z)$ such that
$\lambda(z)=h(z)\widetilde{\lambda}(z)$ is a psedocanonical
cross-section with the initial vector
$f_0=\widetilde{\lambda}(z_0)$. Let the power series expansion of
$\lambda(z)$ on a disc $\Delta$ with the center at $z_0$ be given by
the formula $(3.3)$. Moreover, we can assume that $h(z)\neq0$ for
all $z\in\Delta$. Hence, we have that
$$
\bigvee\{\lambda(z):z\in\Delta\}=\bigvee\{\widetilde{\lambda}(z):z\in\Delta\}=\mathcal{H}.
$$
This implies that $\lambda$ is a spanning. Thus,
$\{f_{k}\}_{k=0}^{\infty}$ is complete.
\end{proof}

From Theorem \ref{Theorem: Coefficent sequence of psedocanonical
section is Minimal} and Theorem \ref{Theorem: Existence of complete
minimal coes psedocanonical cross section}, we obtain directly the
following.

\begin{theorem}\label{Theorem: General situation: Coefficent sequence of canonical section is Minimal}
Let $T \in \mathcal{B}_{n}(\Omega)$. Then for each $z_0\in\Omega$,
there is a psedocanonical cross-section
$$
\lambda(z)=f_{0}+\sum_{k=1}^{\infty} f_{k}(z-z_{0})^{k}
$$
such that the vector sequence $\{f_{k}\}_{k=0}^{\infty}$ is minimal
and complete.
\end{theorem}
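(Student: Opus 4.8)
The plan is to combine the two immediately preceding results, since the desired cross-section already packages both properties. First I would invoke Theorem \ref{Theorem: Existence of complete minimal coes psedocanonical cross section}, which guarantees that for the given $z_0\in\Omega$ there exists a \emph{spanning} psedocanonical cross-section
$$
\lambda(z)=f_{0}+\sum_{k=1}^{\infty} f_{k}(z-z_{0})^{k}
$$
defined on a disc $\Delta$ centered at $z_0$, with $f_0=\lambda(z_0)\neq0$. By the observation made just before Definition \ref{Definition: Psedocanonical cross section} (that spanning of $\lambda$ is equivalent to $\bigvee_{k=0}^\infty\{f_k\}=\mathcal H$), the coefficient sequence $\{f_k\}_{k=0}^{\infty}$ is complete. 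This settles the completeness half.

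For minimality I would appeal to Theorem \ref{Theorem: Coefficent sequence of psedocanonical section is Minimal}. The cross-section $\lambda$ produced above is psedocanonical, so the ``in particular'' clause of that theorem applies directly: the coefficient sequence of any psedocanonical cross-section is automatically minimal. Concretely, being psedocanonical means $f_0\bot f_k$ for every $k\ge1$, hence $f_0\not\in\bigvee_{k=1}^\infty\{f_k\}$, which is exactly the criterion the earlier theorem shows to be equivalent to minimality. Thus the same $\{f_k\}_{k=0}^{\infty}$ is minimal.

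Putting these together, the single sequence $\{f_k\}_{k=0}^{\infty}$ arising from $\lambda$ is simultaneously complete and minimal, which is the assertion. There is essentially no additional obstacle to overcome at this stage: the real content was discharged earlier, with the genuinely nontrivial input being the existence step, which in turn rests on the spanning holomorphic cross-section supplied by \cite[Theorem A]{Zhu} together with the localization argument modeled on \cite[Corollary 1.13]{Douglas1}. The only point requiring a little care is to ensure one and the same cross-section carries both features; but since the existence theorem already outputs a psedocanonical spanning $\lambda$, no reconciliation between two different constructions is needed, and the result follows at once.
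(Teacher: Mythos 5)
Your proposal is correct and is exactly the paper's argument: the paper states that this theorem follows directly from Theorem \ref{Theorem: Coefficent sequence of psedocanonical section is Minimal} (minimality via $f_0\bot f_k$, hence $f_0\notin\bigvee_{k\ge1}\{f_k\}$) combined with Theorem \ref{Theorem: Existence of complete minimal coes psedocanonical cross section} (a spanning psedocanonical cross-section, giving completeness). Your additional remark that a single cross-section carries both properties simultaneously is the right point to check, and it holds for precisely the reason you give.
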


Now we can apply above theorem to prove Theorem \ref{Theorem: CD
Operators are shift on C-M sequence}.

\textit{Proof of Theorem 1.1}. For every $z_{0} \in \Omega$. By
Theorem \ref{Theorem: General situation: Coefficent sequence of
canonical section is Minimal} there is a  psedocanonical
cross-section
$$
\lambda(z)=f_{0}+\sum_{k=1}^{\infty} f_{k}(z-z_{0})^{k}
$$
near $z_{0}$ such that the vector sequence
$\{f_{k}\}_{k=0}^{\infty}$ is both complete and minimal. Since
$\lambda(z) \in {\rm ker}(T-z)$, it follows that
$$
\sum_{k=0}^\infty
f_k(z-z_0)^{k+1}=(T-z_0)\lambda(z)=\sum_{k=1}^\infty
(T-z_0)f_k(z-z_0)^{k}.
$$
Hence, we have that $(T-z_0)f_k=f_{k-1}$ for $k\geq1$ and
$(T-z_{0})f_{0}=0$, that is $T-z_{0}$ is the backward shift on
$\{f_{k}\}_{k=0}^{\infty}$. $\hfill{}\Box$

\subsection{Proof of theorem \ref{Theorem: CD operator is an adjoint op of some shift on G basis}
 and theorem \ref{Theorem: A Cowen-Douglas oper never be a shift on basis for n ge 2}}


\begin{lemma}\label{Lemma: Adjoint of a Shift on a spanning minimal seq must be a shift on G basis}
Suppose $\{(f_{k}, g_{k})\}_{k=0}^{\infty}$ is a biorthogonal system and
$\{f_{k}\}_{k=0}^{\infty}$ is spanning, that is, $\vee_{k\ge 0}\{f_{k}\}=\mathcal{H}$.
Moreover, suppose $T$ is a backward shift on
the sequence $\{f_{k}\}_{k=0}^{\infty}$,
i.e., $Tf_{0}=0$ and $Tf_k=f_{k-1}$ for $k\geq1$. Then we have

\begin{enumerate}
\item
The sequence $\{g_{k}\}_{k=0}^{\infty}$ is a generalized basis of the Hilbert space $\mathcal{H}$;
\item
The adjoint operator $T^{*}$
must be a foreward shift on the generalized basis $\{g_{k}\}_{k=0}^{\infty}$, that
is, we have $T^{*}g_{k}=g_{k+1}$ for $k=0,1,\cdots.$
\end{enumerate}
\end{lemma}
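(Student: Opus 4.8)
The plan is to deduce both statements directly from the biorthogonality relations $(f_i,g_j)=\delta_{ij}$, the shift relations for $T$, and the spanning hypothesis on $\{f_k\}_{k=0}^\infty$, with no further input needed.

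First I would prove (1). The relation $(f_i,g_j)=\delta_{ij}$ is symmetric in the two families: conjugating gives $(g_i,f_j)=\delta_{ij}$, so it exhibits $\{f_k\}_{k=0}^{\infty}$ as a biorthogonal sequence for $\{g_k\}_{k=0}^{\infty}$, whence $\{g_k\}_{k=0}^{\infty}$ is minimal. By the definition of a generalized basis recalled in the introduction, to conclude that $\{g_k\}_{k=0}^{\infty}$ is a generalized basis it remains only to check that its biorthogonal sequence $\{f_k\}_{k=0}^{\infty}$ is total, i.e.\ that $(x,f_k)=0$ for all $k$ forces $x=0$. But in a Hilbert space totality of $\{f_k\}_{k=0}^\infty$ coincides with completeness $\bigvee_{k\ge 0}\{f_k\}=\mathcal{H}$, which is exactly the spanning hypothesis. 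Hence $\{g_k\}_{k=0}^{\infty}$ is a generalized basis.

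Next I would prove (2). Since $\{f_k\}_{k=0}^{\infty}$ is complete, a vector of $\mathcal{H}$ is determined by its inner products against all the $f_j$; equivalently, to establish $T^{*}g_k=g_{k+1}$ it suffices to verify $(T^{*}g_k,f_j)=(g_{k+1},f_j)$ for every $j\ge 0$ and then use that $T^{*}g_k-g_{k+1}\perp f_j$ for all $j$ forces the difference to vanish. For the left-hand side I would write $(T^{*}g_k,f_j)=(g_k,Tf_j)$ and substitute the shift relations $Tf_0=0$ and $Tf_j=f_{j-1}$ ($j\ge 1$); biorthogonality then gives $(g_k,Tf_j)=\delta_{j-1,k}=\delta_{j,k+1}$ for $j\ge 1$ and $(g_k,Tf_0)=0$, which is also $\delta_{0,k+1}$ since $k+1\ge 1$. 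For the right-hand side biorthogonality gives $(g_{k+1},f_j)=\overline{(f_j,g_{k+1})}=\delta_{j,k+1}$. The two sides agree for every $j\ge 0$, so completeness of $\{f_k\}_{k=0}^\infty$ yields $T^{*}g_k=g_{k+1}$.

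The computations are elementary, so the only point requiring care is conceptual rather than technical. In part (1) one must correctly apply the definition of a generalized basis with the roles of the two families exchanged, keeping track of which sequence plays the part of the biorthogonal functionals, and one must invoke the Hilbert-space equivalence between \emph{spanning/complete} and \emph{total}. In part (2) the decisive ingredient is again the spanning hypothesis: it is precisely what upgrades the computation ``$T^{*}g_k-g_{k+1}$ is orthogonal to every $f_j$'' to the conclusion ``$T^{*}g_k-g_{k+1}=0$''. Without completeness of $\{f_k\}_{k=0}^\infty$ the difference could only be located in $\{f_k\}^{\perp}$, which need not be trivial; thus the spanning assumption is used in an essential way in both parts.
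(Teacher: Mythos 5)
Your proposal is correct and follows essentially the same route as the paper: the same identification of $\{f_k\}_{k=0}^\infty$ as the total biorthogonal sequence of $\{g_k\}_{k=0}^\infty$ in part (1), and in part (2) the same computation $(T^*g_k,f_j)=(g_k,Tf_j)=\delta_{j,k+1}$ combined with the spanning hypothesis. The only cosmetic difference is the last step of (2): the paper locates both $T^*g_k$ and $g_{k+1}$ in the one-dimensional orthogonal complement of $\bigvee_{j\ne k+1}\{f_j\}$, whereas you observe directly that $T^*g_k-g_{k+1}$ is orthogonal to the complete sequence $\{f_j\}_{j=0}^\infty$ and hence vanishes; your formulation is, if anything, the cleaner one.
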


\begin{proof}
\begin{enumerate}
\item
Firstly we show that $\{g_{k}\}_{k=0}^{\infty}$ is a generalized basis.
Since the sequence
$\{f_{k}\}_{k=0}^{\infty}$ is spanning, we have $(x, f_{k})=0$ for each $k\ge 1$ if and only if $x=0$.
Hence as a functional sequence, $\{f_{k}\}_{k=0}^{\infty}$ is total.
\item
Note that
$$
(f_{j+1},T^*g_i)=(Tf_{j+1},g_i)=(f_j,g_i)=\delta_{ij}.
$$
then
by $\{(f_{k}, g_{k})\}_{k=0}^{\infty}$ is a biorthogonal system, we know the sequence $\{f_{k}\}_{k=0}^{\infty}$
is minimal($f_{j+1} \notin \vee_{k \ne j+1, k\ge 0}\{f_{k}\}$). Moreover by
$\vee_{k\ge 0}\{f_{k}\}=\mathcal{H}$, we know that $\vee_{k \ne j+1, k\ge 0}\{f_{k}\}$ is a subspace
of the Hilbert space $\mathcal{H}$ with codimension $1$.
Therefore we must have that $ T^*g_k=g_{k+1}$ for all $k\geq 0$.
\end{enumerate}
\end{proof}

\textit{Proof of Theorem \ref{Theorem: CD operator is an adjoint op of some shift on G basis}}.
It is a directly result of above lemma \ref{Lemma: Adjoint of a Shift on a spanning minimal seq must be a shift on G basis}
and theorem \ref{Theorem: Existence of complete minimal coes psedocanonical cross section}.

\begin{lemma}\label{Lemma: Adjoint of a Shift on M basis also be a shift}
Suppose $\{(f_{k}, g_{k})\}_{k=0}^{\infty}$ is a biorthogonal system and
$\{f_{k}\}_{k=0}^{\infty}$ is a Markushevich basis.
Moreover, let $T$ be a backward shift on
the Markushevich basis $\{f_{k}\}_{k=0}^{\infty}$ of $\mathcal{H}$,
i.e., $Tf_{0}=0$ and $Tf_k=f_{k-1}$ for $k\geq1$. The the adjoint operator $T^{*}$
must be a foreward shift on the Markushevicz basis $\{g_{k}\}_{k=0}^{\infty}$, that
is, we have $T^{*}g_{k}=g_{k+1}$ for $k=0,1,\cdots.$
\end{lemma}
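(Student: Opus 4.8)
The plan is to split the statement into two independent assertions: first, that $\{g_{k}\}_{k=0}^{\infty}$ is itself a Markushevich basis of $\mathcal{H}$, and second, that $T^{*}g_{k}=g_{k+1}$ for every $k\geq 0$. Since a Markushevich basis is in particular a spanning minimal sequence, the hypotheses here are strictly stronger than those of Lemma \ref{Lemma: Adjoint of a Shift on a spanning minimal seq must be a shift on G basis}. Consequently the shift identity can be produced by exactly the same computation as there, and the only genuinely new content is the self-duality of the Markushevich basis property under the interchange of $\{f_{k}\}$ and $\{g_{k}\}$.

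For the shift identity I would reuse the biorthogonality computation directly. For every $j\geq 1$ one has $(f_{j},T^{*}g_{k})=(Tf_{j},g_{k})=(f_{j-1},g_{k})=\delta_{j-1,k}=\delta_{j,k+1}$, while for $j=0$ one gets $(f_{0},T^{*}g_{k})=(Tf_{0},g_{k})=0$. Comparing these with $(f_{j},g_{k+1})=\delta_{j,k+1}$, which holds for all $j\geq 0$ (the case $j=0$ giving $\delta_{0,k+1}=0$ since $k+1\geq 1$), I obtain $(f_{j},T^{*}g_{k}-g_{k+1})=0$ for every $j\geq 0$. Because $\{f_{k}\}_{k=0}^{\infty}$ is spanning, its orthogonal complement is trivial, and hence $T^{*}g_{k}-g_{k+1}=0$, i.e. $T^{*}g_{k}=g_{k+1}$.

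It then remains to verify that $\{g_{k}\}_{k=0}^{\infty}$ is a Markushevich basis, which I would do by exploiting the symmetry of the biorthogonal system $\{(f_{k},g_{k})\}$. For minimality of $\{g_{k}\}$, note that $(f_{n},g_{k})=\delta_{nk}$ shows $f_{n}$ is orthogonal to $\bigvee_{k\neq n}\{g_{k}\}$ while $(f_{n},g_{n})=1$, so $g_{n}\notin\bigvee_{k\neq n}\{g_{k}\}$; thus $\{g_{k}\}$ is minimal, with $\{f_{k}\}$ as its biorthogonal sequence. For totality of this biorthogonal sequence, the hypothesis that $\{f_{k}\}$ is complete is precisely the statement that $\{f_{k}\}$ is total, so $\{g_{k}\}$ is a generalized basis. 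Finally, the extra defining condition of a Markushevich basis is completeness of $\{g_{k}\}$ itself, and this is supplied by the hypothesis that $\{f_{k}\}$ is a Markushevich basis (whose biorthogonal partner $\{g_{k}\}$ is total, hence complete). Therefore $\{g_{k}\}$ satisfies all the requirements of \cite[Def.~8.1]{Singer2}.

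All of these computations are elementary; the one point demanding care is to keep the two completeness/totality conditions appearing in the definition of a Markushevich basis matched to the correct sequence, since the roles of $\{f_{k}\}$ and $\{g_{k}\}$ are exchanged throughout. This bookkeeping, rather than any analytic difficulty, is the main thing to get right.
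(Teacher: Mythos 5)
Your proposal is correct and takes essentially the same route as the paper: the paper's proof simply asserts that $\{g_{k}\}_{k=0}^{\infty}$ is itself a Markushevich basis and then invokes Lemma \ref{Lemma: Adjoint of a Shift on a spanning minimal seq must be a shift on G basis}, whose biorthogonality computation you reproduce inline. Your write-up merely supplies the details the paper leaves implicit (the $j=0$ case of the pairing and the role-swapping bookkeeping showing $\{g_{k}\}$ is a Markushevich basis), so it is a more careful rendering of the same argument rather than a different one.
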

\begin{proof}
It is clear that $\{g_{k}\}_{k=0}^{\infty}$ is also a markushevicz basis. Then apply
lemma \ref{Lemma: Adjoint of a Shift on a spanning minimal seq must be a shift on G basis}.
\end{proof}

\textit{Proof of Theorem \ref{Theorem: A Cowen-Douglas oper never be a shift on basis for n ge 2}}.
Since $\mathrm{dim\,ker}(T-z_0)\geq 2$, we can take a non-zero
vector $x\in \mathrm{ker}(T-z_0)$ and $(x,g_0)=0$. If $(x,g_k)=0$,
then by lemma \ref{Lemma: Adjoint of a Shift on M basis also be a shift} we have
$$
(x,g_{k+1})=(x,T^*g_{k})=(Tx,g_{k})=(z_{0}x,g_k)=0
$$
Hence, it follows by induction that $(x,g_k)=0$ for $k\geq 0$. This
implies that $x=0$ because the sequence $\{g_{k}\}_{k=0}^{\infty}$
is total, a contradiction. $\hfill{}\Box$

\section{Shift on M-basis or ONB in the Cowen-Douglas class $B_{1}(\Omega)$}

\begin{theorem}\label{Theorem: when Cowen-Douglas operators are shifts}
Let $T \in \mathcal{B}_{1}(\Omega)$, $z_0\in \Omega$ and let $B$ be
the canonical right inverse of $T-z_{0}$. Then the following
statements are equivalent:

(1) $\bigvee_{k=0}^{\infty}\mathrm{ker}{B^*}^k=\mathcal{H}$;

(2) There exists a Markushevich basis $\{f_{k}\}_{k=0}^{\infty}$ of
$\mathcal{H}$ which satisfies $f_{0}\bot f_k$ for $k\geq1$ such that
$T-z_0$ is a backward shift on $\{f_{k}\}_{k=0}^{\infty}$;

(3) There is a positive real number $\varepsilon$ such that $B^* \in
\mathcal{B}_{1}(\mathbb{D}_{\varepsilon})$, where
$\mathbb{D}_{\varepsilon}=\{z\in \mathbb{C}:\ |z|< \varepsilon\}$
\end{theorem}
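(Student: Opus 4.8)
The plan is to prove the chain $(2)\Leftrightarrow(1)\Leftrightarrow(3)$, working throughout with the unit vector $u$ spanning the one–dimensional space $\ker(T-z_0)$, the canonical right inverse $B$ with $\operatorname{ran}B=\mathcal M:=(\ker(T-z_0))^{\bot}$, and the canonical cross-section $s_u(z)=\sum_{k\ge0}B^ku\,(z-z_0)^k$; write $f_k=B^ku$. The key preliminary facts, used in both equivalences, are that $B$ is injective with closed range $\mathcal M$ of codimension $1$, hence Fredholm of index $-1$, that $\ker B^{*}=(\operatorname{ran}B)^{\bot}=\ker(T-z_0)=\mathbb C u$, and that pushing $\operatorname{codim}\operatorname{ran}B^{k}=k$ through the isomorphism $B:\mathcal H\to\mathcal M$ gives $\dim\ker(B^{*})^{k}=k$ for every $k$.

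First I would settle $(1)\Leftrightarrow(2)$. By Theorem \ref{Theorem: s_u is a cross-section of complex bundle of_T}, $s_u$ is a holomorphic cross-section, and since $f_0=u\bot f_k$ for $k\ge1$ it is psedocanonical; hence, independently of $(1)$, the sequence $\{f_k\}$ is always complete (Theorem \ref{Theorem: Holomorphic cross-section is spanning}) and minimal (Theorem \ref{Theorem: Coefficent sequence of psedocanonical section is Minimal}), with $T-z_0$ acting as the backward shift $f_k\mapsto f_{k-1}$, $f_0\mapsto 0$. Thus the only extra content in $(2)$ is that the unique biorthogonal sequence $\{g_k\}$ be total. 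From $(f_i,g_j)=\delta_{ij}$ and $Bf_i=f_{i+1}$ one reads off $B^{*}g_0=0$ and $B^{*}g_k=g_{k-1}$, whence $(B^{*})^{k}g_j=0$ for $j<k$; combined with $\dim\ker(B^{*})^{k}=k$ and the linear independence of $g_0,\dots,g_{k-1}$, this forces $\ker(B^{*})^{k}=\bigvee\{g_0,\dots,g_{k-1}\}$, so that $\bigvee_k\ker(B^{*})^{k}=\bigvee_k\{g_k\}$. Hence $(1)$ says exactly that $\{g_k\}$ is total, i.e.\ that $\{f_k\}$ is a Markushevich basis, giving $(1)\Rightarrow(2)$. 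For $(2)\Rightarrow(1)$ I note that any sequence as in $(2)$ is forced to be the canonical one: $(T-z_0)f_0=0$ gives $f_0\in\mathbb C u$, and $f_k\bot f_0$ puts $f_k\in\mathcal M$ for $k\ge1$, so applying $B$ to $(T-z_0)f_k=f_{k-1}$ and using $B(T-z_0)|_{\mathcal M}=I_{\mathcal M}$ yields $f_k=Bf_{k-1}=B^{k}f_0$; the identity $\bigvee_k\ker(B^{*})^{k}=\bigvee_k\{g_k\}$ then returns $(1)$.

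Next I would prove $(1)\Leftrightarrow(3)$ by showing that, apart from the span condition, $B^{*}$ automatically satisfies the Cowen--Douglas axioms on a small disc. Since $B$ is Fredholm of index $-1$, so is $B-\bar z$ for all $z$ in some $\mathbb D_\varepsilon$, and $B-\bar z$ has no kernel there: a nonzero solution of $Bx=\bar z x$ with $\bar z\neq0$ would lie in $\mathcal M$ and satisfy $(T-z_0)x=\bar z^{-1}x$, i.e.\ $z_0+\bar z^{-1}\in\sigma_p(T)$, which is impossible once $|z|<\varepsilon$ because $\sigma(T)$ is bounded. Consequently, for all $z\in\mathbb D_\varepsilon$, $B-\bar z$ is an injective Fredholm operator of index $-1$, so $\dim\ker(B^{*}-z)=\operatorname{codim}\operatorname{ran}(B-\bar z)=1$, $\operatorname{ran}(B^{*}-z)=(\ker(B-\bar z))^{\bot}=\mathcal H$, and $z\in\sigma(B^{*})$; that is, $B^{*}$ obeys axioms (1), (2), (4) of the definition on $\mathbb D_\varepsilon$. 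By the Corollary of Section 2 applied to $B^{*}$ at the point $0$ (legitimate since $\dim\ker(B^{*})^{k}=k$), the remaining axiom $\bigvee_{z\in\mathbb D_\varepsilon}\ker(B^{*}-z)=\mathcal H$ is equivalent to $\bigvee_{k}\ker(B^{*})^{k}=\mathcal H$, which is exactly $(1)$. Hence $B^{*}\in\mathcal B_1(\mathbb D_\varepsilon)$ for some $\varepsilon>0$ if and only if $(1)$ holds.

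The routine parts are the shift bookkeeping and the biorthogonal computation; the main obstacle is the spectral analysis of $B^{*}$ in $(1)\Leftrightarrow(3)$ --- establishing that $B-\bar z$ stays injective on a full disc around $0$ (so that the Fredholm index truly delivers a one-dimensional eigenspace together with surjectivity) and checking the hypotheses needed to invoke the Section~2 corollary for $B^{*}$. I expect the correspondence $z_0+\bar z^{-1}\in\sigma_p(T)$ together with the boundedness of $\sigma(T)$ to be the decisive ingredient controlling the radius $\varepsilon$.
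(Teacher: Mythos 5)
Your proposal is correct and takes essentially the same route as the paper: the canonical sequence $f_k=B^k u$ with its automatic completeness and minimality, the biorthogonal sequence $\{g_k\}$ on which $B^*$ acts as a backward shift, the index count $\dim\ker (B^*)^k=k$ identifying $\ker (B^*)^k$ with $\bigvee\{g_0,\dots,g_{k-1}\}$, and the spectral-radius argument showing $B-\bar z$ is injective (hence $B^*-z$ surjective with one-dimensional kernel) on a small disc. The only differences are organizational: you prove $(2)\Leftrightarrow(1)\Leftrightarrow(3)$ instead of the paper's cycle $(1)\Rightarrow(2)\Rightarrow(3)\Rightarrow(1)$, and you make explicit two steps the paper leaves implicit, namely that any sequence satisfying $(2)$ is forced to equal $\{B^k f_0\}$, and the appeal to the Section 2 corollary to convert $\bigvee_k\ker(B^*)^k=\mathcal{H}$ into the spanning axiom $\bigvee_{z}\ker(B^*-z)=\mathcal{H}$.
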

\begin{proof}
$(1)\Rightarrow (2)$. Take a unit vector $f_0\in {\rm ker}(T-z_0)$
and set $f_k=B^kf_0$. Then $T-z_0$ is a backward shift on
$\{f_{k}\}_{k=0}^{\infty}$. Since
$\mathrm{ran}B=(\mathrm{ker}(T-z_0))^\bot$, we know that $f_{0}\bot
f_k$ for $k\geq 1$. Note that
$$
\gamma(z)=f_0+\sum_{k=1}^{\infty} f_k(z-z_{0})^{k}
$$
is the canonical cross-section with the initial vector $f_0$, it
follows by Theorem \ref{Theorem: Holomorphic cross-section is
spanning} and Theorem \ref{Theorem: Coefficent sequence of
psedocanonical section is Minimal} that the sequence
$\{f_{k}\}_{k=0}^{\infty}$ is a complete and minimal. By
Hahn--Banach Theorem, there is unique sequence
$\{g_{k}\}_{k=0}^{\infty}$ in $\mathcal{H}$ with $g_0=f_0$ such that
the pair $(f_k,g_k)$ is a biorthogonal system. And, resembling to
the proof of Theorem \ref{Theorem: A Cowen-Douglas oper never be a
shift on basis for n ge 2}, we know also that $(T-z_{0})^*$ is the
forward shift on the vector sequence $\{g_{k}\}_{k=0}^{\infty}$.
Moreover, since $B^*(T-z_0)^*=I$ and
$$
\mathrm{ker}B^*=(\mathrm{ran}B)^\bot=\mathrm{ker}(T-z_0),
$$
we have that
$$
B^*g_{0}=B^*f_0=0 \hbox{ and } B^*g_{k+1}=B^*(T-z_0)^*g_{k}=g_k
\hbox{ for } k\geq 1
$$
i.e., $B^*$ is the backward shift on the sequence
$\{g_{k}\}_{k=0}^{\infty}$. Hence we have that $g_0, \cdots
,g_{k-1}$ are in $\mathrm{ker}{B^*}^k$. Now, making use of index
formulas, it follows that
$$
\mathrm{dim\,ker}{B^*}^k=\mathrm{ind}{B^*}^k=-\mathrm{ind}B^k
=\mathrm{ind}(T-z_0)^k=k.
$$
So, we know that the family $\{g_0, \cdots ,g_{k-1}\}$ is a basis of
$\mathrm{ker}{B^*}^k$. This implies that
$$
\bigvee_{k=0}^{\infty}\{g_k\}=\bigvee_{k=0}^{\infty}\mathrm{ker}{B^*}^k=\mathcal{H}.
$$
Thus, $\{f_{k}\}_{k=0}^{\infty}$ is a Markushevich basis.

$(2)\Rightarrow (3)$. As given, there is the  biorthogonal system
$(f_k,g_k)$ such that $f_{0}\bot f_k$ for $k\geq1$ and the sequence
$\{g_{k}\}_{k=0}^{\infty}$ is total. The same as in the preceding,
we know that $B^*$ is the backward shift on the sequence
$\{g_{k}\}_{k=0}^{\infty}$. Hence, it holds that
$$
\bigvee_{k=0}^{\infty}\mathrm{ker}{B^*}^k=\bigvee_{k=0}^{\infty}\{g_k\}=\mathcal{H}.
$$
Since $\mathrm{ran}B^*$ is a closed set, we have $0\in \rho _F(B^*)$
and $ \mathrm{ind}{B^*}=1$. Therefore,  There is a positive number
$\varepsilon$ with $\varepsilon <\frac{1}{r(T-z_0)}$ such that
$\mathbb{D}_{\varepsilon}\subset \rho _F(B^*)$ and $
\mathrm{ind}(B^*-z)=1$ for $z\in \mathbb{D}_{\varepsilon}$. Assume
that $(B-\overline{z})x=0$ for $z\in \mathbb{D}_{\varepsilon}$ and
$x \in\mathcal{H}$. Then $x=\overline{z}(T-z_0)x$ because $B$ is
right inverse of $T-z_{0}$. Hence, if $z=0$ then $x=0$; If $z\not=0$
then $(T-z_0-{\overline{z}}^{-1})x=0$. Since
${\overline{z}}^{-1}\notin \sigma(T-z_0)$, it follows that $x=0$.
Thus, we obtain that
$$
\mathrm{dim\,ker}(B^*-z)=\mathrm{ind}(B^*-z)=1 \hbox{ for } z\in
\mathbb{D}_{\varepsilon}.
$$
Also, we have that
$$
\mathrm{ran}(B^*-z)=(\mathrm{ker}(B-\overline{z}))^\bot=\mathcal{H}
\hbox{ for } z\in \mathbb{D}_{\varepsilon}.
$$
Now, we have proved that $B^* \in
\mathcal{B}_{1}(\Omega_{\varepsilon})$.

$(3)\Rightarrow (1)$. It is obviously.
\end{proof}

\begin{example}
Suppose $\{e_{n}\}_{n=1}^{\infty}$ is an ONB of the Hilbert space $\mathcal{H}$.
Define
$$
f_{n}=e_{n}-e_{n+1}, g_{n}=\sum_{k=1}^{n}e_{k},
$$
then $\{(e_{n}, f_{n})\}_{n=1}^{\infty}$ is a biorthogonal system and both $\{f_{n}\}_{n=1}^{\infty}$
and $\{g_{n}\}_{n=1}^{\infty}$ are Markushevicz basis. And it is not hard to see that they are not basis.
Moreover, the classical backward shift on the ONB $\{e_{n}\}_{n=1}^{\infty}$ is also a backward shift on the
markushevicz basis $\{g_{n}\}_{n=1}^{\infty}$.

\end{example}

For $T \in \mathcal{B}_{1}(\Omega)$, using the curvature function of
the bundle $E_T$,  M. J. Cowen and R. G. Douglas gave a
characterization of $T$ being a backward weighted shift on an
orthonormal basis \cite[Corollary 1.9]{Douglas1}. Now, in
terminology of operator theory, we give another characterization.

\begin{theorem}\label{Theorem: C-D operators are shifts (1)}
Let $T \in \mathcal{B}_{1}(\Omega)$, $z_0\in \Omega$ and let $B$ be
the canonical right inverse of $T-z_{0}$. Then $T-z_0$ is a backward
weighted shift on an orthonormal basis of $\mathcal{H}$ if and only
if, for every $k\geq 0$, the subspace ${\mathcal
M}_k:=B^k\mathrm{ker}(T-z_0)$ is invariant for $B^*B$.
\end{theorem}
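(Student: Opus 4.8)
The plan is to fix a unit vector $f_0$ spanning the one-dimensional space $\ker(T-z_0)$ and set $f_k = B^k f_0$, so that $\mathcal{M}_k = \mathbb{C}f_k$. By Lemma \ref{Lemma: Existence of Canonical Right Inverse} we have $(T-z_0)B = I$ with $\mathrm{ran}B = (\ker(T-z_0))^\bot$; consequently $f_k \in \mathrm{ran}B$ gives $f_k \bot f_0$ for all $k \geq 1$, and $B(T-z_0)$ is the orthogonal projection onto $(\ker(T-z_0))^\bot$. Since $\gamma(z) = f_0 + \sum_{k\geq1} f_k(z-z_0)^k$ is the canonical cross-section, Theorem \ref{Theorem: Holomorphic cross-section is spanning} shows $\{f_k\}_{k=0}^\infty$ is complete, while $(T-z_0)f_k = f_{k-1}$ and $(T-z_0)f_0 = 0$ show $T-z_0$ is the backward shift on $\{f_k\}$. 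The whole argument then reduces to a single equivalence: I will show that each of the two stated conditions is equivalent to the assertion that the system $\{f_k\}_{k=0}^\infty$ is \emph{mutually orthogonal}; once orthogonality is in hand, normalizing $\phi_k = f_k/\|f_k\|$ yields an orthonormal basis (completeness being already available) on which $T-z_0$ acts as the backward weighted shift $(T-z_0)\phi_k = (\|f_{k-1}\|/\|f_k\|)\phi_{k-1}$.

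For the ``only if'' direction, suppose $T-z_0$ is a backward weighted shift on an orthonormal basis $\{\phi_k\}$, so $(T-z_0)\phi_0 = 0$ and $(T-z_0)\phi_k = w_k\phi_{k-1}$. First every $w_k \neq 0$, for otherwise $\phi_k$ would be a nonzero vector in the line $\ker(T-z_0) = \mathbb{C}\phi_0$, contradicting $\phi_k \bot \phi_0$; and $\phi_0 \in \ker(T-z_0)$ lets me take $f_0 = \phi_0$. Applying $B$ to $(T-z_0)\phi_k = w_k\phi_{k-1}$ and using $B(T-z_0)\phi_k = \phi_k$ (as $\phi_k \bot f_0$ for $k\ge1$) gives $\phi_k = w_k B\phi_{k-1}$, whence by induction $\phi_k = (w_1\cdots w_k)f_k$. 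Thus each $f_k$ is a scalar multiple of $\phi_k$; the identities $B\phi_k = w_{k+1}^{-1}\phi_{k+1}$ and $B^*\phi_{k+1} = \overline{w_{k+1}}^{-1}\phi_k$ then yield $B^*B\phi_k = |w_{k+1}|^{-2}\phi_k$, so $\mathcal{M}_k = \mathbb{C}\phi_k$ is $B^*B$-invariant.

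The ``if'' direction is the crux. Assuming each $\mathcal{M}_k$ is $B^*B$-invariant means $B^*Bf_k = \lambda_k f_k$, where necessarily $\lambda_k = \|f_{k+1}\|^2/\|f_k\|^2 > 0$ is real. The key computation is the telescoping identity obtained, for $j > k \geq 1$, from $(f_j,f_k) = (Bf_{j-1}, Bf_{k-1}) = (B^*Bf_{j-1}, f_{k-1}) = \lambda_{j-1}(f_{j-1}, f_{k-1})$; iterating this $k$ times lowers both indices simultaneously and produces $(f_j, f_k) = \lambda_{j-1}\lambda_{j-2}\cdots\lambda_{j-k}\,(f_{j-k}, f_0)$. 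Since $j-k \geq 1$, the last factor vanishes because $f_{j-k} \bot f_0$, so $(f_j, f_k) = 0$ for all $j \neq k$, i.e. $\{f_k\}$ is orthogonal. Normalizing as above completes the proof.

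I expect the main obstacle to be exactly this last telescoping step: the $B^*B$-invariance only says each $f_k$ is an eigenvector of $B^*B$, and it is not obvious a priori that this diagonal behaviour forces orthogonality. The mechanism that makes it work is that passing from $(f_j,f_k)$ to $(f_{j-1},f_{k-1})$ is legitimate precisely because $f_{k-1} = Bf_{k-2}$ already lies in $\mathrm{ran}B$ for $k\ge1$, and the repeated reduction always terminates at an inner product $(f_m, f_0)$ with $m \geq 1$, which is zero by the basic orthogonality $\mathrm{ran}B \bot \ker(T-z_0)$. By comparison, checking nonvanishing of the weights and the completeness/normalization bookkeeping should be routine.
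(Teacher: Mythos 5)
Your proof is correct and follows essentially the same route as the paper: the crux (the ``if'' direction) is the identical argument --- $B^*B$-invariance of the one-dimensional spaces $\mathcal{M}_k=\mathbb{C}B^kf_0$ gives $B^*Bf_k=\lambda_kf_k$, and the relation $(f_j,f_k)=\lambda_{j-1}(f_{j-1},f_{k-1})$ telescopes down to $(f_{j-k},f_0)=0$, which is exactly the paper's induction run in the opposite direction, followed by the same normalization using completeness of the canonical cross-section's coefficients. The only real difference is in the ``only if'' direction, where the paper cites \cite[Theorem 3.2]{Li} for boundedness of $\{|w_k|^{-1}\}$ before identifying $B$ as a forward weighted shift, whereas your projection identity $B(T-z_0)=P_{(\ker(T-z_0))^{\bot}}$ yields $\phi_k=w_kB\phi_{k-1}$ directly and makes that step self-contained --- a minor but genuine simplification.
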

\begin{proof}
Suppose that $T-z_0$ is a backward weighted shift on an orthonormal
basis $\{e_{k}\}_{k=0}^{\infty}$ with the weight sequence
$\{w_{k}\}_{k=1}^{\infty}$, i.e.,  $(T-z_0)e_0=0$ and
$(T-z_0)e_k=w_ke_{k-1}$ for $k>0$. Note that $T-z_0$ is in
$\mathcal{B}_{1}(\Omega_0)$, where $\Omega_0=\Omega - \{z_0\}$, it
follows from \cite[Theorem 3.2]{Li} that the sequence
$\{|w_{k}|^{-1}\}_{k=1}^{\infty}$ is bounded. It is easy to verify
that $B$ is a forward weighted shift on the orthonormal basis
$\{e_{k}\}_{k=0}^{\infty}$ with the weight sequence
$\{w_{k}^{-1}\}_{k=1}^{\infty}$,  i.e., $Be_k=w_k^{-1}e_{k+1}$ for
$k \geq 0$ and  $B^*$ is a backward weighted shift
$\{e_{k}\}_{k=0}^{\infty}$ with the weight sequence
$\{\overline{w}_{k}^{-1}\}_{k=1}^{\infty}$. So we have that
$B^*Be_k=|w_{k}|^{-2}e_k$ for $k \geq 0$. Since
$\mathrm{ker}(T-z_0)=\mathrm{span}\{e_0\}$ we know ${\mathcal M}_k$
is invariant for $B^*B$.

Conversely, if ${\mathcal M}_k$ is invariant for $B^*B$ then we take
a unit vector $f_0$ from $\mathrm{ker}(T-z_0)$ and set $f_k=B^kf_0$.
Similar to the proof of $(1)\Rightarrow (2)$ in Theorem
\ref{Theorem: Cowen-Douglas operators are shifts}, we know that the
sequence $\{f_{k}\}_{k=0}^{\infty}$ is a complete and minimal and
$T-z_0$ is a backward shift on $\{f_{k}\}_{k=0}^{\infty}$. And, by
Lemma \ref{Lemma: Existence of Canonical Right Inverse}, we have
that $f_0\perp f_k$ for $k>0$. Since $f_k \in {\mathcal M}_k$ and
$\mathrm{dim}{\mathcal M}_k=1$, it follows that
$B^*Bf_k=\lambda_kf_k$. If $(f_i,f_j)=0$ for $i<j$, then
$$
(f_{i+1},f_{j+1})=(Bf_i,Bf_j)=(B^*Bf_i,f_j)=(\lambda_if_i,f_j)=0.
$$
This implies that the sequence $\{f_{k}\}_{k=0}^{\infty}$ is
orthogonal. Set $e_k=\frac{f_{k}}{\|f_k\|}$. Then we have that the
sequence $\{e_{k}\}_{k=0}^{\infty}$ is an orthonormal basis of
$\mathcal{H}$ and $T-z_0$ is a backward weighted shift on
$\{e_{k}\}_{k=0}^{\infty}$.
\end{proof}

Imitating the proof of Theorem \ref{Theorem: C-D operators are
shifts (1)}, we can also come to the following.

\begin{theorem}\label{Theorem: C-D operators are shifts (2)}
Let $T \in \mathcal{B}_{1}(\Omega)$, $z_0\in \Omega$ and let $B$ be
the canonical right inverse of $T-z_{0}$. Then $T-z_0$ is a backward
shift on an orthonormal basis of $\mathcal{H}$ if and only if
$B^*B=I$, i.e., $B$ is an isometry.
\end{theorem}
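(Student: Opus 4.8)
The plan is to follow the scheme of the proof of Theorem~\ref{Theorem: C-D operators are shifts (1)}, noting that the hypothesis $B^*B=I$ is far more rigid than the invariance condition there and forces every weight to equal $1$. First I would treat the forward implication. Suppose $T-z_0$ is a backward shift on an orthonormal basis $\{e_{k}\}_{k=0}^{\infty}$, so $(T-z_0)e_0=0$ and $(T-z_0)e_k=e_{k-1}$ for $k\geq 1$. Then $\ker(T-z_0)=\mathrm{span}\{e_0\}$, and hence $(\ker(T-z_0))^\bot=\bigvee_{k\geq 1}\{e_k\}$. Introducing the operator $S$ defined by $Se_k=e_{k+1}$, I would verify that $(T-z_0)S=I$ and $\mathrm{ran}\,S=(\ker(T-z_0))^\bot$; the uniqueness clause of Lemma~\ref{Lemma: Existence of Canonical Right Inverse} then identifies $S=B$, i.e.\ $Be_k=e_{k+1}$. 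Since $B$ carries the orthonormal basis $\{e_k\}$ to the orthonormal sequence $\{e_{k+1}\}_{k\geq 0}$, it preserves inner products and is therefore an isometry, so $B^*B=I$.

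For the converse I would assume $B^*B=I$, take a unit vector $f_0\in\ker(T-z_0)$, and set $f_k=B^kf_0$. Exactly as in the implication $(1)\Rightarrow(2)$ of Theorem~\ref{Theorem: when Cowen-Douglas operators are shifts}, the power series $\sum_{k\geq 0}f_k(z-z_0)^k$ is the canonical cross-section with initial vector $f_0$, so $\{f_{k}\}_{k=0}^{\infty}$ is complete and minimal and $T-z_0$ is a backward shift on it; moreover $\mathrm{ran}\,B=(\ker(T-z_0))^\bot$ gives $f_0\bot f_k$ for $k\geq 1$. The decisive step is to promote this to full orthonormality. Since $B^*B=I$, an immediate induction yields ${B^*}^kB^k=I$ for all $k$, so that for $i\leq j$
$$
(f_i,f_j)=(B^if_0,B^jf_0)=(f_0,{B^*}^iB^{j}f_0)=(f_0,B^{\,j-i}f_0)=(f_0,f_{j-i}).
$$
For $i=j$ this equals $\|f_0\|^2=1$, while for $i<j$ the vector $f_{j-i}$ lies in $\mathrm{ran}\,B=(\ker(T-z_0))^\bot$ and $f_0\in\ker(T-z_0)$, so the inner product vanishes. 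Hence $(f_i,f_j)=\delta_{ij}$, and $\{f_{k}\}_{k=0}^{\infty}$ is an orthonormal basis on which $T-z_0$ acts as the unweighted backward shift.

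I expect the orthonormality computation in the converse to be the crux: everything rests on the identity ${B^*}^kB^k=I$, which lets the single relation $B^*B=I$ collapse every cross term $(f_i,f_j)$ into $(f_0,f_{j-i})$, whereupon the previously established orthogonality $f_0\bot\mathrm{ran}\,B$ closes the argument. The forward implication is comparatively routine once uniqueness of the canonical right inverse pins $B$ down as the forward shift $e_k\mapsto e_{k+1}$.
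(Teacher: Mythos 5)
Your proof is correct and follows essentially the route the paper intends: the paper gives no separate argument for this theorem, saying only to imitate the proof of Theorem~\ref{Theorem: C-D operators are shifts (1)}, and your converse (setting $f_k=B^kf_0$, reusing the completeness/minimality and backward-shift facts, then getting orthonormality from the isometry property of $B$ together with $f_0\perp\mathrm{ran}\,B$) is exactly that imitation, with your identity ${B^*}^kB^k=I$ playing the role of the paper's induction $(f_{i+1},f_{j+1})=(B^*Bf_i,f_j)=(f_i,f_j)$. The forward direction via the uniqueness clause of Lemma~\ref{Lemma: Existence of Canonical Right Inverse} is likewise the same identification of $B$ as the forward shift that the paper makes in Theorem~\ref{Theorem: C-D operators are shifts (1)}.
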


Recall that a sequence $\{f_{k}\}_{k=0}^{\infty}$ is called a
\textit{Schauder basis} of $\mathcal{H}$ if for every vector $x \in
\mathcal{H}$ there exists a unique sequence
$\{\alpha_{k}\}_{k=0}^{\infty}$ of complex numbers such that the
series $\sum_{k=0}^{\infty} \alpha_{k}f_{k}$ converges to $x$ in
norm.

\begin{example}\label{Example: Shift on conditional basis}
Although theorem \ref{Theorem: A Cowen-Douglas oper never be a shift on basis for n ge 2}
tell us that for $n \ge 2$, a Cowen-Douglas operator never be a shift on basis, if we consider
the n-multiple shift case(or more general, operator-weighted shift) then we can get more interest examples.
Here we show that $S^{2}$ can be seen as a 2-multiple shift on some conditional basis.
Suppose $\{e_{n}\}_{n=1}^{\infty}$ is an ONB of the Hilbert space $\mathcal{H}$.
Let $\{\alpha_{n}\}_{n=1}^{\infty}$ be a sequence of positive numbers such that $\sum_{n=1}^{\infty}n\alpha_{n}^{2}<\infty$
and $\sum_{n=1}^{\infty} \alpha_{n}=\infty$.
Then the sequences $\{f_{n}\}_{n=1}^{\infty}, \{g_{n}\}_{n=1}^{\infty}$ defined as
$$\begin{array}{ll}
f_{2n-1}=e_{2n-1}+\sum_{i=n}^{\infty} \alpha_{i-n+1}e_{2i}, &~~f_{2n}=e_{2n}, ~~n=1, 2, \cdots \\
g_{2n-1}=e_{2n-1}, &~~g_{2n}=\sum_{i=n}^{\infty} -\alpha_{i-n+1}e_{2i-1}+e_{2n}, ~~n=1, 2, \cdots
\end{array}
$$
are both conditional basis of $\mathcal{H}$(see \cite{Singer1}, Example14.5, p429.).
Let $S$ be the classical foreward shift on $\{e_{n}\}_{n=1}^{\infty}$. Then we have
$$
S^{2}f_{n}=f_{n+2},  \hbox{ for }n\ge 1
$$
and
$$
(S^{*})^{2}g_{n}=g_{n-2}, \hbox{ for } n>2, \hbox{ and } (S^{*})^{2}g_{1}=(S^{*})^{2}g_{2}=0.
$$
Hence $S^{2}$ is a foreward 2-multiple shift on a conditional basis, and
$(S^{*})^{2}$ is a foreward 2-multiple shift on a conditional basis.
\end{example}

\begin{question}
When a Cowen-Douglas operator in $\mathcal{B}_{1}(\Omega)$ can be a
shift on some Schauder basis?
\end{question}


\begin{thebibliography}{000}


\bibitem{Misra}  D. N. Clark and G. Misra,  On weighted shifts, curvature and similarity, J. London Math. Soc. 31 (1985), no. 2, 357¨C368.

\bibitem{Douglas1} M. J. Cowen and R. G. Douglas,  Complex geometry and operator theory, Acta Math. 141 (1978), no. 3-4, 187-261.

\bibitem{Douglas2} M. J. Cowen and R. G. Douglas, Equivalence of connections, Adv. in Math. 56 (1985), no. 1, 39-91.

\bibitem{Grabiner}  S. Grabiner, Weighted shifts and Banach algebras of power series, American J. Math. 97(1975), 16-42.

\bibitem{Jiang1} C. L. Jiang and Z. Y. Wang, Strongly irreducible operators on Hilbert space, Pitman Research Notes in Mathematics Series, 389. Longman, Harlow, 1998.

\bibitem{Jiang2} C. L. Jiang, Similarity classification of Cowen-Douglas operators, Canad. J. Math. 56 (2004), no. 4, 742-775.

\bibitem{Jiang3} C. L. Jiang and Z. Y. Wang, Structure of Hilbert space operators, World Scientific Publishing Co. Pte. Ltd., Hackensack, NJ, 2006.

\bibitem{Jiang4} C. L. Jiang and K. Ji, Similarity classification of holomorphic curves, Adv. Math. 215 (2007), no. 2, 446-468.

\bibitem{Li} J. X. Li, Y. Q. Ji and S. L. Sun, The essential spectrum and Banach reducibility of operator weighted shifts, Acta Math. Sinica, English Series
17 (2001), 413-424.

\bibitem{B} R. E. Megginson, An introuduction to Banach space theory, GTM183, Springe-Verlag, 1998.


\bibitem{Shields} A. L. Shields, Weighted shift operators and analytic function theory, Topics in Operator Theory, Math. Surveys No. 13, 49-128, Amer. Math. Soc.,
Providence (1974).

\bibitem{Singer1} I. Singer, Bases in Banach Space I, Springer-verlag, 1970.

\bibitem{Singer2} I. Singer, Bases in Banach Space II, Springer-verlag, 1970.

\bibitem{Zhu} K. Zhu, Operators in Cowen-Douglas classes, Illinois J. Math. 44 (2000), no. 4, 767-783.
\end{thebibliography}
\end{document}